\newtheorem{same}{This should never appear}[section]
\newtheorem{defin}[same]{Definition}
\newtheorem{remark}[same]{Remark}
\newtheorem{theorem}[same]{Theorem}
\newtheorem{example}[same]{Example}
\newtheorem{lemma}[same]{Lemma}
\newtheorem{fact}[same]{Fact}
\newtheorem{question}[same]{Question}
\newtheorem{cor}[same]{Corollary}
\newtheorem{hypothesis}[same]{Hypothesis}
\newtheorem*{mainm}{Main Theorem}
\newtheorem*{sec-1}{Shelah's categoricity conjecture for AECs}
\newtheorem{defin*}{Definition}
\newtheorem*{theorem*}{Theorem}
\newcommand{\skipitems}[1]{%
  \addtocounter{\@enumctr}{#1}%
}
\newcommand{\K}{\mathbf{K}}
\newcommand{\LS}{\operatorname{LS}}
\newcommand{\ccr}{\text{card}(R) + \aleph_0}
\newcommand{\si}{\mu_K(SI)}
\newcommand{\leap}[1]{\le_{#1}}
\newcommand{\lea}{\leap{\K}}
\title{Characterizing categoricity in several classes of modules}
\author{Marcos Mazari-Armida}
\email{Marcos.MazariA@colorado.edu}
\urladdr{https://math.colorado.edu/~mama9382/}
\address{Department of Mathematics \\ University of Colorado Boulder \\ Boulder, Colorado, USA}
\begin{document}

%%%%%%%%%%%%%%%%%%%%%%%%%%%%%%%%%%%%%%%%%%%%%%%
\begin{abstract} 

We show that the condition of being categorical in a tail of cardinals can be characterized algebraically for several classes of modules.

 \begin{theorem}Assume $R$ is an associative ring with unity. 
\begin{enumerate}
\item The class of locally pure-injective $R$-modules is $\lambda$-categorical in \emph{all} $\lambda > \text{card}(R) +\aleph_0$   if and only if $R \cong M_n(D)$ for $D$ a division ring and $n \geq 1$.
\item The class of flat $R$-modules is $\lambda$-categorical in \emph{all} $\lambda  > \text{card}(R) + \aleph_0$ if and only  if $R \cong M_n(k)$ for $k$ a local ring such that its maximal ideal is left $T$-nilpotent and $n \geq 1$.
\item Assume $R$ is a commutative ring. The class of absolutely pure $R$-modules is $\lambda$-categorical in \emph{all} $\lambda > \text{card}(R) + \aleph_0$ if and only if $R$ is a local artinian ring.
\end{enumerate}
 \end{theorem}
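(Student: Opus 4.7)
The three items share a common template which I would follow: each collection of modules is set up as an abstract elementary class under pure embeddings (using the AEC machinery developed earlier in the paper), the algebra-implies-categoricity direction is proved via a structure theorem, and its converse is proved contrapositively by exhibiting non-isomorphic models of the same cardinality when $R$ fails the stated form.

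For the algebra-to-categoricity direction I would argue separately in the three cases. In (1), if $R \cong M_n(D)$ then $R$ is semisimple, every $R$-module is semisimple and hence (locally) pure-injective, and is a direct sum of copies of the unique simple; dimension is the sole isomorphism invariant, so the class is categorical in every cardinal. In (2), if $R \cong M_n(k)$ with $k$ local and its maximal ideal left $T$-nilpotent, Morita equivalence reduces to the case of $k$; by Bass's theorem $k$ is left perfect, so every flat $k$-module is projective, and since $k$ is local every projective is free. Rank is then a complete invariant. In (3), if $R$ is commutative local artinian then $R$ is noetherian, so the absolutely pure modules coincide with the injective ones; by Matlis theory every injective is a direct sum of copies of the unique indecomposable injective $E(R/\mathfrak{m})$, giving a complete invariant indexed by the number of summands.

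For the converse direction I would proceed contrapositively, producing for each $R$ not of the stated form two non-isomorphic modules of cardinality $\lambda$ for arbitrarily large $\lambda > \text{card}(R)+\aleph_0$. For (1), failure of semisimplicity should be upgraded to the existence of a locally pure-injective $R$-module with non-trivial submodule structure whose isomorphism type differs from a purely semisimple module of the same size. For (2), failure of the stated structure leads either to a non-perfect ring $R$ (allowing non-free flat modules, which one scales up to the desired $\lambda$) or to several non-isomorphic indecomposable projectives arising from distinct blocks in the primitive idempotent decomposition. For (3), if $R$ is not noetherian then absolutely pure strictly exceeds injective and admits too many isomorphism types; if $R$ is noetherian but has several maximal ideals, the distinct injective hulls $E(R/\mathfrak{m})$ produce non-isomorphic injectives; and if $R$ is local noetherian but not artinian, the rich structure of $E(R/\mathfrak{m})$ and other indecomposable injectives obstructs categoricity.

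The main obstacle I anticipate is the converse direction: converting an abstract categoricity assumption into a concrete structural statement on the ring requires uniform constructions of non-isomorphic modules in arbitrarily large cardinals. These constructions rely on sharp classification results (Bass's characterization of perfect rings, Matlis theory, the theory of pure-injective envelopes) together with robust invariants such as rank, dimension, number of indecomposable summands, or Ulm-type invariants. A further technicality is verifying that in each setting the AEC has L\"owenheim--Skolem number at most $\text{card}(R)+\aleph_0$, so that the categoricity threshold in the theorem is the natural one, and checking that pure embeddings give the right notion of strong embedding in which the algebraic invariants are in fact isomorphism invariants of the AEC.
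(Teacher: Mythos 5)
Your forward directions coincide with the paper's: Wedderburn--Artin for (1), Bass's theorem plus ``projective = free over a local ring'' for (2), and Matlis theory for (3), with the Krull--Schmidt--Remak--Azumaya theorem (which the paper packages as Hypothesis \ref{hyp1} and Lemma \ref{basicl}) guaranteeing that the number of copies of the unique strongly indecomposable summand is a complete invariant. Your converse for (2) is also essentially the paper's argument read contrapositively: comparing $M^{(\lambda)}$ with $R^{(\lambda)}$ forces every countably generated flat module to be projective, hence $R$ left perfect, and then two distinct blocks of $R/J(R)$ would give two non-isomorphic strongly indecomposable projectives whose $\lambda$-th copowers are non-isomorphic. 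One remark: for the ``all $\lambda$'' statement being proved here the AEC framing, pure embeddings, and L\"{o}wenheim--Skolem bounds are not needed at all --- the paper's proof of this theorem is purely algebraic, and the AEC machinery (Vasey's theorem) only enters when ``all'' is weakened to ``some''.

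The genuine gap is in your converse for (1), and to a lesser extent (3). ``Failure of semisimplicity'' does not by itself yield two non-isomorphic \emph{locally pure-injective} modules of the same size: over a non-semisimple ring the locally pure-injective modules can be a proper subclass of all modules (e.g.\ over $\mathbb{Z}$), so a module ``with non-trivial submodule structure'' need not lie in the class you are trying to show is non-categorical. The missing step is to show first that categoricity forces $R$ to be left pure-semisimple. For an arbitrary module $M$ one compares $(PE(M)^{(\mu)})^{(\aleph_0)}$ --- which is in the class because locally pure-injective modules are closed under direct sums --- with $PE(N)$ for some locally pure-injective $N$ of size $\mu$, where $\mu$ must be chosen with $\mu^{\text{card}(R)+\aleph_0}=\mu$ so that Ziegler's bound gives $\| PE(N) \| = \mu$; categoricity at $\mu$ then makes $PE(M)^{(\mu)}$ $\Sigma$-pure-injective and hence $M$ pure-injective, since $\Sigma$-pure-injectivity passes to pure submodules. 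Only after this reduction does the class coincide with all $R$-modules, so that the semisimplicity and Wedderburn--Artin analysis can be applied. The analogous envelope-plus-cardinal-arithmetic comparison (with injective envelopes and Eklof's bound) is what rules out the non-noetherian case in (3); your sketch leaves that at the level of ``too many isomorphism types,'' which is exactly the point that needs the construction.
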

 
We show that in the above results it is enough to assume $\lambda$-categoricity in \emph{some} big cardinal $\lambda$. This shows that Shelah's Categoricity Conjecture holds for the class of  locally pure-injective modules, flat modules and absolutely pure modules. These classes are not first-order axiomatizable for arbitrary rings.

We provide rings such that the class of flat modules is categorical in a tail of cardinals but it is not first-order axiomatizable.

\end{abstract}

%%%%%%%%%%%%%%%%%%%%%%%%%%%%%%%%%%%%%%%%%%%%%%%

\maketitle

{\let\thefootnote\relax\footnote{{AMS 2020 Subject Classification:
Primary: 16B70, 03C60. Secondary:  16L30, 16P40, 13L05, 03C35, 03C45.
Key words and phrases.  Categoricity; Strongly indecomposable modules; Semisimple modules; Absolutely pure modules; Flat modules; Morley's Categoricity Theorem; Shelah's Categoricity Conjecture;  Abstract Elementary Classes.}}}

%\tableofcontents

%%%%%%%%%%%%%%%%%%%%%%%%%%%%%%%%%%%%%%%%%%%%%%%

\section{Introduction}

A class of structures is \emph{$\lambda$-categorical} in some cardinal $\lambda$ if there is a structure of size $\lambda$ and if any two structures of size $\lambda$ are isomorphic. A class of structures is \emph{categorical in a tail of cardinals} if there is a cardinal $\mu$ such that the class is $\lambda$-categorical in all $\lambda > \mu$. In this paper, all the structures will be left modules.

 The notion of $\lambda$-categoricity was isolated by \L o\'{s} \cite{los} in the mid 1950s. It has played a key role in the development of model theory. More precisely, the birth of modern model theory can be traced back to \emph{Morley's Categoricity Theorem} \cite{mor} which asserts that if  the class of structures of a first-order theory in a countable language is $\lambda$-categorical in \emph{some} uncountable cardinal $\lambda$ then it is $\mu$-categorical in \emph{all} uncountable cardinals $\mu$. Currently, \emph{Shelah's Categoricity Conjecture}\footnote{The statement of the conjecture is given in Section 2.3.}  \cite{sh87a}, \cite[6.13.(3)]{sh704}, which is a far-reaching generalization to Morley's Categoricity Theorem, is the main test question in the development of non-elementary model theory.\footnote{By non-elementary model theory it is understood the study of classes of structures which are not axiomatizable in first-order logic.}   It should be noted that despite many approximations and close to 3000 pages of published work towards a solution to Shelah's Categoricity Conjecture, the conjecture remains open (even the original version from the seventies).

We show that the condition of being categorical in a tail of cardinals is a natural algebraic property. We achieve this by characterizing it  algebraically for several classes of modules.

 \begin{mainm}\label{mainm} Assume $R$ is an associative ring with unity. 
\begin{enumerate}
\item (Theorem \ref{lpi})  The class of locally pure-injective $R$-modules is $\lambda$-categorical in \emph{all} $\lambda > \text{card}(R)+\aleph_0$ if and only if $R \cong M_n(D)$ for $D$ a division ring and $n \geq 1$.

\item (Theorem \ref{fmain}) The class of flat $R$-modules is $\lambda$-categorical in \emph{all} $\lambda  > \text{card}(R) + \aleph_0$ if and only  if $R \cong M_n(k)$ for $k$ a local ring such that its maximal ideal is left $T$-nilpotent and $n \geq 1$.
\item (Theorem \ref{absp-c}, Theorem \ref{linj}) Assume $R$ is a commutative ring. The following are equivalent. 
\begin{itemize}
\item The class of absolutely pure $R$-modules is $\lambda$-categorical in \emph{all} $\lambda > \text{card}(R) + \aleph_0$.
\item The class of locally injective $R$-modules is $\lambda$-categorical in \emph{all} $\lambda > \text{card}(R) + \aleph_0$.
\item $R$ is a local artinian ring.
\end{itemize}
\end{enumerate}
 \end{mainm}

The results asserted in the Main Theorem  do not rely on any model-theoretic results and instead rely heavily on algebraic results. A key result used to show the three assertions of the main theorem is the \emph{Krull-Schmidt-Remak-Azumaya Theorem}.

A natural question to ask from a model-theoretic perspective is if the results of the Main Theorem  are still true if one assumes $\lambda$-categoricity in \emph{some} big cardinal $\lambda$ instead of in \emph{all} cardinals $\lambda > \text{card}(R) +\aleph_0$.

We show that $\lambda$-categoricity in \emph{some} cardinal $\lambda > \text{card}(R) + \aleph_0$ is equivalent to categoricity in \emph{all} cardinals greater than $\text{card}(R) + \aleph_0$ for the class of flat $R$-modules using algebraic tools. In the case of absolutely pure modules, we are able to show that the same is true using a deep result from the theory of abstract elementary classes \cite[9.8]{vaseyII}. Finally, for locally pure-injective $R$-modules and  locally injective $R$-modules we show that $\lambda$-categoricity in \emph{some} cardinal $\lambda > 2^{\text{card}(R) + \aleph_0}$ is equivalent to categoricity in \emph{all} cardinals greater than $\text{card}(R) + \aleph_0$ using again \cite[9.8]{vaseyII}. Flat modules, absolutely pure modules,  locally pure-injective modules and locally injective modules are  not first-order axiomatizable for arbitrary rings, so the results for these classes do not follow from Morley's Categoricity Theorem or from Shelah's generalization to uncountable languages \cite{sh31}. Moreover, the results for these classes do not follow directly from any of the positive results known for abstract elementary classes.
 
   %We show that this is also the case for the classes of  $\Sigma$-injective and $\Sigma$-pure-injective modules.\footnote{These do not seem to be abstract elementary class for any reasonable substructure relation, so they do not provide further evidence of the veracity of Shelah Categoricity Conjecture.} 

A major contribution of the paper is to provide algebraic examples of classes categorical in a tail of cardinals. We obtain both first-order and non-first-order axiomatizable classes. For example, it follows from our results that the class of injective $R$-modules for $R$ a commutative local artinian ring, which is first-order axiomatizable, is categorical in a tail of cardinals. More importantly, we provide rings such that the class of flat modules is categorical in a tail of cardinals and it is not first-order axiomatizable (Example \ref{exflat}). This last example is one of the first algebraic examples of a class categorical in a tail of cardinals which is not first-order axiomatizable. Moreover, we give an example of an abstract elementary class which is superstable, not first-order axiomatizable and not categorical in a tail of cardinals (Example \ref{last}).

%The Main Theorem provides some evidence that categoricity in a tail of cardinals is a natural algebraic property. Nevertheless, based on the results of \cite{m2}, \cite{m3}, \cite{maz2}, we argue that superstability is a significantly more natural algebraic property. Recall that on those papers it is shown that notherian rings, perfect rings and pure-semisimple rings can be characterized via superstability. 

In an effort to make the paper concise, given a class of modules we study simultaneously if the condition of being categorical in a tail of cardinals can be characterized algebraically and its categoricity spectrum. Here the \emph{categoricity spectrum} of a class is the set of cardinals where the class is categorical. Still, we want to emphasize that both questions are somewhat independent and provide us with different information. The algebraic characterization of categoricity in a tail of cardinals displays the naturality of categoricity as an  algebraic notion and can be used to provide many examples of classes categorical in a tail of cardinals. On the other hand, the study of the categoricity spectrum done in this paper provides further evidence of the veracity of Shelah's Categoricity Conjecture and showcases how abstract elementary classes results can be used to study specific classes of structures.

It is worth mentioning that this is not the first paper where categoricity in a tail of cardinals is charactetized algebraically for classes of algebraic structures. In \cite{givant}, \cite{givant2} and \cite{pal},  categoricity in a tail of cardinals is characterized algebraically for classes of algebraic structures that are first-order axiomatizable by universal Horn sentences. In particular, the result for the class of all $R$-modules presented in this paper (Lemma \ref{mmod}) can be obtained using\cite[Theorem I]{givant}. 

%It is widely accepted classes of structures categorical in a tail of cardinals are virtous. In this paper we give a series of examples of natural classes of modules which are categorical in a tail of cardinals. 

The paper is divided into four sections. Section 2 presents some basic results and preliminaries. Section 3 has the main results of the paper. Categoricity in a tail of cardinals is  characterized algebraically for several classes of modules. It is shown that Shelah's Categoricity Conjecture holds for the class of locally pure-injective modules, absolutely pure modules, locally injective modules and flat modules. Section 4 has some open problems.

After showing a preliminary version of this paper to Sebastien Vasey, he realized that it is possible to show that Shelah's Categoricity Conjecture is true for the class of locally pure-injective modules, flat modules, absolutely pure modules and locally injective modules using only model-theoretic tools. His argument relies on some known instances of Shelah's Categoricity Conjecture\footnote{More precisely, to obtain the result for locally pure-injective modules, absolutely pure modules and locally injective modules one uses the result for homogeneous classes and to obtain the result for flat modules one uses the result for excellent classes.} together with stronger results from the theory of AECs of modules. It is worth emphasizing that his argument requires some additional work before being able to apply the known instances of Shelah's Categoricity Conjecture. 

Some of the results of this paper were obtained while the author was working on a Ph.D. under the direction of Rami Grossberg at Carnegie Mellon University and I would like to thank Professor Grossberg for his guidance and assistance in my research in general and in this work in particular. I would like to thank John T. Baldwin,  Samson Leung, Thomas Kucera, Philipp Rothmaler, Daniel Simson, Agnes Szendrei, Sebastien Vasey and two anonymous referee for several comments that helped improve the paper. In particular, I would like to thank an anonymous referee for pointing out Example \ref{exflat}.(3).

\section{Basic results and preliminaries}

In this section we begin by introducing some module theoretic preliminaries and a module theoretic framework where we can transfer categoricity. Then we analyse the class of $R$-modules. We finish by briefly recalling the notions of abstract elementary classes that are used in this paper. It is worth emphasizing that if the reader is only interested in the algebraic characterization of being categorical in a tail of cardinals she or he can skip the AECs preliminaries as they are not used for those results.

\subsection{Basic results}   All rings considered in this paper are associative with unity and all modules are left modules. Given a module $M$, we will write $|M|$ for its underlying set and $\| M \|$ for its cardinality. For a ring $R$, we will write $\text{card}(R)$ for its cardinality.

We say that a class of modules is \emph{$\lambda$-categorical} in some cardinal $\lambda$ if there is a module of size $\lambda$ in the class and if any two modules in the class of size $\lambda$ are isomorphic. A class of modules is categorical \emph{in a tail of cardinals} if there is a cardinal $\mu$ such that the class is $\lambda$-categorical in all $\lambda > \mu$.\footnote{From a model-theoretic perspective, all the classes studied in this paper have as their language the standard language of modules, i.e., for a ring $R$ we take $L_{R}= \{0, +,-\} \cup \{ r\cdot  : r \in R \}$. Two modules are isomorphic as structures in the language $L_R$ if and only if they are isomorphic as modules.}

A ring $R$ is \emph{local} if the sum of two non-invertible elements of $R$ is non-invertible. We will be specially interested in strongly indecomposable modules.

\begin{defin}
A left $R$-module $M$ is \emph{strongly indecomposable} if $M$ is non-zero and the endomorphism ring of $M$ is local. Strongly indecomposable modules are sometimes called endolocal modules.
\end{defin}

A module is \emph{indecomposable} if $M$ is non-zero and $M$ cannot be written as the direct sum of two non-trivial submodules.

\begin{remark}\label{ermk}\
\begin{itemize}
\item  If a module is strongly indecomposable, then it is indecomposable. The other direction  fails in general, e.g., $R=M = \mathbb{Z}$.
\item If an injective module is indecomposable, then it is strongly indecomposable.
\end{itemize}
\end{remark} 

The next result is the key module theoretic result that is used in the paper. It assures us that if a module can be decomposed into a direct sum of strongly indecomposable modules, then the decomposition is basically unique. 
 
\begin{fact}\label{strong}(Krull-Schmidt-Remak-Azumaya Theorem, see e.g. \cite[2.13]{fac}) Let $M \cong \bigoplus_{i \in I} M_i \cong \bigoplus_{j \in J} N_j$. If $M_i, N_j$ are strongly indecomposable for every $i \in I$ and $j \in J$, then there is $\sigma: I \to J$ a bijective function such that for every $i \in I$, $M_i \cong N_{\sigma(i)}$. 
\end{fact}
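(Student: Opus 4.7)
The plan is to reduce the theorem to an exchange lemma for a single strongly indecomposable summand, then iterate the exchange transfinitely to build $\sigma$. Let $p_i, \iota_i$ (resp.\ $q_j, \kappa_j$) denote the canonical projections and inclusions of the first (resp.\ second) decomposition.

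Step 1 (Exchange lemma). I would first establish: if $M = X \oplus Y = \bigoplus_j N_j$ with $X$ and each $N_j$ strongly indecomposable, then some $j_0$ satisfies $X \cong N_{j_0}$ and $M = X \oplus \bigoplus_{j \ne j_0} N_j$. Writing $p : M \to X$, $\iota : X \to M$, define $\phi_j := p \kappa_j q_j \iota \in \operatorname{End}(X)$. For each $x \in X$, $\iota(x)$ has finite support in $\bigoplus_j N_j$, so only finitely many $\phi_j(x)$ are nonzero and $x = \sum_j \phi_j(x)$. Since $\operatorname{End}(X)$ is local, its non-units form a two-sided ideal; if every $\phi_j$ were a non-unit, then for any finite $F \subseteq J$ the sum $\sum_{j \in F}\phi_j$ would be a non-unit, and hence $\id_X - \sum_{j \in F}\phi_j$ would be a unit. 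Taking $F$ to contain the support of a fixed nonzero $x_0$ yields $(\id_X - \sum_{j \in F}\phi_j)(x_0) = 0$, forcing $x_0 = 0$, a contradiction. Hence some $\phi_{j_0}$ is an automorphism of $X$. Its factorization $X \xrightarrow{q_{j_0}\iota} N_{j_0} \xrightarrow{p\kappa_{j_0}} X$ makes $q_{j_0}\iota$ a split monomorphism, and indecomposability of $N_{j_0}$ upgrades it to an isomorphism. Consequently $M = X \oplus \ker(q_{j_0}) = X \oplus \bigoplus_{j \ne j_0} N_j$.

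Step 2 (Iteration). I would then apply Zorn's lemma to the poset of partial matchings $(I', \sigma')$ with $I' \subseteq I$ and $\sigma' : I' \hookrightarrow J$ injective, satisfying $M_i \cong N_{\sigma'(i)}$ for all $i \in I'$ together with the compatibility that the complement of $\bigoplus_{i \in I'} M_i$ in $M$ decomposes, up to isomorphism, into the strongly indecomposables $N_j$ with $j \in J \setminus \sigma'(I')$. For a maximal element $(I^*, \sigma^*)$, if $I^* \ne I$, then any $i^* \in I \setminus I^*$ makes $M_{i^*}$ a summand of this complement, which also has a decomposition into strongly indecomposables from the $N_j$ side; the exchange lemma then supplies $j^* \in J \setminus \sigma^*(I^*)$ with $M_{i^*} \cong N_{j^*}$, contradicting maximality. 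Surjectivity of $\sigma^*$ follows because any unused $N_j$ would survive as a nontrivial summand of $\bigoplus_i M_i = M$, forcing $N_j = 0$, which is impossible for a strongly indecomposable module.

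The main obstacle is Step 1: one must pass from the pointwise identity $x = \sum_j \phi_j(x)$ --- which does \emph{not} assert that $\sum_j \phi_j$ is a well-defined endomorphism of $X$ when $J$ is infinite --- to a finite-sum argument inside $\operatorname{End}(X)$. The Jacobson-radical trick using that $\id_X - \sum_{j \in F}\phi_j$ is a unit is the essential maneuver that bridges this gap and is where strong indecomposability (i.e., the local endomorphism ring, not merely indecomposability) is decisively used. Step 2 is routine but requires care in checking that the compatibility condition in the poset is preserved under unions of chains, so that Zorn's lemma applies.
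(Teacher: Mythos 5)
Your Step 1 is correct and is the standard Azumaya exchange argument: passing from the pointwise identity $x=\sum_j\phi_j(x)$ to a finite sum over the support of a single nonzero $x_0$, using that in a local endomorphism ring a finite sum of non-units is a non-unit and hence $\id_X-\sum_{j\in F}\phi_j$ is a unit, is exactly the right maneuver, and the upgrade of $q_{j_0}\iota$ from split monomorphism to isomorphism via indecomposability of $N_{j_0}$ is fine. (Note the paper gives no proof of this Fact; it is quoted from Facchini, so the only comparison available is with the standard textbook proof.)

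The genuine gap is in Step 2, at exactly the point you defer with ``requires care'': the compatibility condition is \emph{not} preserved under unions of chains, and this is the real crux of the infinite Krull--Schmidt--Remak--Azumaya theorem. Take the internal form of your condition, $M=\bigoplus_{i\in I'}M_i\oplus\bigoplus_{j\notin\sigma'(I')}N_j$. For a chain with union $(I^*,\sigma^*)$, independence of the two families does pass to the limit (any relation has finite support and hence lives at some stage), but \emph{spanning} does not: given $m\in M$, choosing a stage $\alpha$ whose image covers the part of the $N$-support of $m$ lying in $\sigma^*(I^*)$ and writing $m=a_\alpha+b_\alpha$ accordingly, the remainder $b_\alpha=m-a_\alpha$ can acquire $N$-components indexed by $\sigma^*(I^*)\setminus\sigma'_\alpha(I'_\alpha)$ through the uncontrolled $N$-support of $a_\alpha$, and the obvious iterative correction need not terminate. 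The ``up to isomorphism'' form of the condition fares no better, since the stagewise isomorphisms cannot be assembled into one at the limit. This is not an artifact of your bookkeeping: for families of modules with local endomorphism rings, the stronger property that every direct summand is complemented by a partial sum of the $M_i$ is known to fail in general (it is equivalent to local semi-$T$-nilpotency of the family), so transfinite exchange cannot simply be pushed through limit stages. The standard proof instead uses your Step 1 only to conclude that every $N_j$ is isomorphic to some $M_i$ and conversely, and then shows for each isomorphism type $X$ that $\card{\{i:M_i\cong X\}}=\card{\{j:N_j\cong X\}}$ --- by finite cancellation when this multiplicity is finite, and by a separate counting argument when it is infinite. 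Your write-up is missing that second half entirely.
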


Let us introduce a framework where we will show categoricity can be transferred from one cardinal to many cardinals. 

\begin{hypothesis}\label{hyp1}
$K$ is a class of left $R$-modules such that the following hold:
\begin{enumerate}
\item Every non-zero module in $K$ can be written as a direct sum of strongly indecomposable modules in $K$.
\item There is a cardinal $\mu$ such that if $M \in K$ and $M$ is a strongly indecomposable module then $\| M \| \leq \mu$. We denote by $\si$ the minimum $\mu$ satisfying such a property.
\item If $M \in K$ and $\theta$ is a cardinal, then $M^{(\theta)} \in K$ where $M^{(\theta)}$ denotes the direct sum of $\theta$ copies of $M$.
\end{enumerate}
\end{hypothesis}

\begin{remark}
We will explicitly mention when we use Hypothesis \ref{hyp1}. 
\end{remark}

We introduce examples of classes of modules satisfying Hypothesis \ref{hyp1}. We will introduce an additional example in Lemma \ref{cor-ab}.

\begin{example}\label{ex!}\
\begin{enumerate}
\item $\Sigma$-injective $R$-modules. A module $M$ is $\Sigma$-injective if $M^{(I)}$ is injective for every set $I$. Condition (1) holds by \cite[Theo 1]{new} and the fact that injective indecomposable modules are strongly indecomposable (see e.g. \cite[2.12]{fac}).  $\si \leq \ccr$ by the closure of $\Sigma$-pure-injective modules under pure submodules. It is clear that condition (3) holds.
\item $\Sigma$-pure-injective $R$-modules. A module $M$ is $\Sigma$-pure-injective if $M^{(\aleph_0)}$ is pure-injective. Condition (1) is known to hold (see e.g.\cite[2.29]{fac}). $\si \leq \ccr$ by the closure of $\Sigma$-pure-injective modules under pure submodules.  It is clear that condition (3) holds.
\item Semisimple $R$-modules. A module $M$ is semisimple if it is the direct sum of simple modules, where a module is simple if  it does not have non-zero proper submodules. Condition (1) holds by the fact that simple modules are strongly indecomposable. It is clear that $\si \leq \ccr$ and that (3) holds.
\end{enumerate}
\end{example}

\begin{lemma}\label{basicl} Let $K$ be a class of modules satisfying Hypothesis \ref{hyp1}. The following are equivalent.
\begin{enumerate}
\item $K$ is $\lambda$-categorical in \emph{all} $\lambda > \si$.
\item $K$ is $\lambda$-categorical in \emph{some} $\lambda > \si$.
\item There is a unique strongly indecomposable module in $K$ up to isomorphisms.
\item There is a strongly indecomposable module $U$ in $K$ such that every non-zero module  in $K$ is isomorphic to $U^{(\lambda)}$ for some cardinal $\lambda$. 
\end{enumerate}
\end{lemma}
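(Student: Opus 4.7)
My plan is to run the cycle $(1)\Rightarrow(2)\Rightarrow(3)\Leftrightarrow(4)\Rightarrow(1)$. The implication $(1)\Rightarrow(2)$ is free, since cardinals above $\si$ certainly exist. The substantive step is $(2)\Rightarrow(3)$. I would fix $\lambda > \si$ witnessing $\lambda$-categoricity; the existence of a module of size $\lambda$ in $K$, combined with Hypothesis \ref{hyp1}(1), yields at least one strongly indecomposable $U \in K$. For arbitrary strongly indecomposable $U, V \in K$, Hypothesis \ref{hyp1}(3) places $U^{(\lambda)}$ and $V^{(\lambda)}$ in $K$, and the bound $\|U\|, \|V\| \leq \si < \lambda$ shows both have cardinality exactly $\lambda$. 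Categoricity forces $U^{(\lambda)} \cong V^{(\lambda)}$, and the Krull--Schmidt--Remak--Azumaya Theorem (Fact \ref{strong}) then provides a bijection matching the two families of strongly indecomposable summands up to isomorphism. Since every summand on one side is $U$ and every summand on the other is $V$, this forces $U \cong V$.

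For $(3)\Leftrightarrow(4)$, I would argue as follows. Assume (3) with unique strongly indecomposable $U$: any non-zero $M \in K$ decomposes as $\bigoplus_{i \in I} U_i$ by Hypothesis \ref{hyp1}(1), and each $U_i \cong U$ by uniqueness, so $M \cong U^{(|I|)}$. Conversely, if (4) holds then any strongly indecomposable $V \in K$ is some $U^{(\mu)}$, and indecomposability (Remark \ref{ermk}) forces $\mu = 1$, hence $V \cong U$. Finally for $(4) \Rightarrow (1)$ and any $\lambda > \si$, the module $U^{(\lambda)}$ witnesses that $K$ has a member of size $\lambda$: since $\|U\| \leq \si < \lambda$, we have $\lambda \cdot \|U\| = \lambda$. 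Any other $M \in K$ of size $\lambda$ is $U^{(\mu)}$ for some $\mu$ with $\mu \cdot \|U\| = \lambda$, and the bound on $\|U\|$ again forces $\mu = \lambda$, giving $M \cong U^{(\lambda)}$.

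The hard step is $(2) \Rightarrow (3)$, where Fact \ref{strong} does the essential work: without Krull--Schmidt--Remak--Azumaya one could not conclude that the isomorphism $U^{(\lambda)} \cong V^{(\lambda)}$ descends to an isomorphism $U \cong V$. Everything else is routine bookkeeping with Hypothesis \ref{hyp1} and cardinal arithmetic on direct sums.
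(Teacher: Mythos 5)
Your proposal is correct and follows essentially the same route as the paper: the key step $(2)\Rightarrow(3)$ is identical (compare $U^{(\lambda)}$ and $V^{(\lambda)}$ via categoricity and apply the Krull--Schmidt--Remak--Azumaya Theorem), and the remaining implications use Hypothesis \ref{hyp1} and the cardinality bound $\|U\|\leq\si<\lambda$ exactly as the paper does. The only differences are cosmetic: you explicitly check the existence half of (3) and the existence of a model of size $\lambda$ in $(4)\Rightarrow(1)$, which the paper leaves implicit, and you include the unneeded direction $(4)\Rightarrow(3)$.
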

\begin{proof}
$(1) \Rightarrow (2)$: Clear.

$(2) \Rightarrow (3)$:  Let $\lambda$ be such that $K$ is $\lambda$-categorical and  $\lambda > \si$. Assume for the sake of contradiction that there are $M, N$ non-isomorphic strongly indecomposable modules in $K$. By the hypothesis on $K$, $\| M \|, \| N \| \leq \si$ and $M^{(\lambda)}, N^{(\lambda)} \in K$ of cardinality $\lambda$, then $M^{(\lambda)} \cong N^{(\lambda)}$ by $\lambda$-categoricity. Therefore, $M$ and $N$ are isomorphic by Fact \ref{strong}, which gives us a contradiction.

$(3) \Rightarrow (4)$: Let $U$ be the unique strongly indecomposable module in $K$ and $M$ a non-zero module in $K$. 
It follows from Condition (1) of Hypothesis \ref{hyp1} that $M \cong U^{(\lambda )}$ for some $\lambda$. 

$(4) \Rightarrow (1)$: Let $\lambda > \si$ and $U$ be a strongly indecomposable module in $K$ given by (4). Let $M, N \in K$ of cardinality $\lambda$. By (4), $M \cong U^{(\lambda_1 )}$
and $N \cong U^{(\lambda_2 )}$ for $\lambda_1, \lambda_2$ cardinals. Since $\| U \| < \lambda$, $\lambda_1 = \lambda_2$ and hence $M \cong N$. \end{proof}

We finish by recording two corollaries.

\begin{cor}\label{cors} Let $K$ be the class of $\Sigma$-injective $R$-modules or the class of $\Sigma$-pure-injective $R$-modules or the class of semisimple $R$-modules . The following are equivalent.
\begin{enumerate}
\item $K$ is $\lambda$-categorical in \emph{all} $\lambda > \ccr$.
\item $K$ is $\lambda$-categorical in \emph{some} $\lambda > \ccr$.
\end{enumerate}
\end{cor}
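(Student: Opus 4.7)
The plan is to derive Corollary \ref{cors} directly from Lemma \ref{basicl} together with the verifications already carried out in Example \ref{ex!}. For each of the three classes ($\Sigma$-injective $R$-modules, $\Sigma$-pure-injective $R$-modules, semisimple $R$-modules), Example \ref{ex!} has already established both that $K$ satisfies Hypothesis \ref{hyp1} and that $\si \leq \ccr$. Hence Lemma \ref{basicl} is available and gives a categoricity transfer with threshold $\si$; the only remaining task is to translate between the thresholds $\si$ and $\ccr$.

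The implication (1) $\Rightarrow$ (2) is immediate: if $K$ is $\lambda$-categorical in every $\lambda > \ccr$, then in particular in some such $\lambda$. For the converse, suppose $K$ is $\lambda_0$-categorical for some $\lambda_0 > \ccr$. Since $\si \leq \ccr < \lambda_0$, the hypothesis of clause (2) of Lemma \ref{basicl} is satisfied. Applying the implication (2) $\Rightarrow$ (1) of that lemma yields that $K$ is $\lambda$-categorical in all $\lambda > \si$, and because $\ccr \geq \si$, this in particular gives $\lambda$-categoricity in every $\lambda > \ccr$.

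There is no real obstacle: the corollary is essentially a repackaging of Lemma \ref{basicl} for the three specific classes verified in Example \ref{ex!}, with the uniform bound $\ccr$ replacing the class-dependent invariant $\si$. The only conceptual point is to note that enlarging the threshold from $\si$ to $\ccr$ weakens both sides of the equivalence in a matching way, so the equivalence is preserved.
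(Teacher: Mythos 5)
Your proposal is correct and matches the paper's intended argument: the corollary is stated without proof precisely because it follows immediately from Lemma \ref{basicl} applied to the three classes, whose verification of Hypothesis \ref{hyp1} with $\si \leq \ccr$ is carried out in Example \ref{ex!}. Your explicit handling of the threshold change from $\si$ to $\ccr$ is the right (and only) point needing comment.
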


 \begin{remark}
Later we will show that categoricity in a tail of cardinals can be characterized algebraically for some classes of modules. A natural question to ask is if the same is true for $\Sigma$-injective modules, $\Sigma$-pure-injective modules or semisimple modules (see Question \ref{q1}).
 \end{remark}
 
 \subsection{$R$-modules} In this subsection we study the class of left $R$-modules.  The results of this subsection are not new. The reason we decided to present the results  is because the techniques used to analyze the class of $R$-modules are similar to those of our main results and  because we could not find a purely algebraic treatment of the results presented in this subsection.
 
 We denote by $R$-Mod the class of \emph{all left $R$-modules}. A ring $R$ is \emph{semisimple} if every left $R$-module is projective.\footnote{A rings $R$ is semisimple if and only if $J(R)=0$ and $R$ is artinian \cite[4.14]{lam1}} A module is \emph{projective} if it is a direct summand of a free module. The next result is the key result regarding semisimple rings.

\begin{fact}\label{wa} (Wedderburn-Artin Theorem, see e.g. \cite[3.5]{lam1})
If $R$ is semisimple, then $R \cong  M_{n_1}(D_1) \times \cdots \times M_{n_{m}}(D_{m})$ for some $m \in \mathbb{N}$, $D_1, \cdots, D_m$ division rings and $n_1, \cdots, n_m \in \mathbb{N}$. This decomposition is unique. Moreover, there are exactly $m$ strongly indecomposable modules (up to isomorphisms) given by $V_{D_1}, \cdots, V_{D_m}$ corresponding to $M_{n_1}(D_1), \cdots, M_{n_m}(D_m)$ respectively where $V_{D_i}^{n_i} \cong M_{n_i}(D_i)$ for every $i$.
\end{fact}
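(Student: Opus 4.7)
The plan is to extract the decomposition of $R$ from the regular module ${}_RR$, viewed as a semisimple left module over itself, and then recover the ring structure by computing its endomorphism ring. First I would use the standard equivalence that if every left $R$-module is projective, then every left $R$-module is a direct sum of simple modules. In particular ${}_RR$ has such a decomposition; because $1 \in R$ generates the regular module, only finitely many summands are needed, giving
${}_RR \cong V_1^{n_1} \oplus \cdots \oplus V_m^{n_m}$
with $V_1,\dots,V_m$ pairwise non-isomorphic simple left ideals of $R$ and $n_i \geq 1$.

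Next I would compute $\mathrm{End}({}_RR)$ in two ways. On one hand, right multiplication gives $\mathrm{End}({}_RR) \cong R^{op}$. On the other, Schur's lemma forces $\mathrm{Hom}(V_i, V_j) = 0$ for $i \neq j$ and makes each $D_i := \mathrm{End}(V_i)$ a division ring, so
$\mathrm{End}({}_RR) \cong \prod_{i=1}^m \mathrm{End}(V_i^{n_i}) \cong \prod_{i=1}^m M_{n_i}(D_i^{op}).$
Taking opposites yields $R \cong \prod_{i=1}^m M_{n_i}(D_i)$.

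For uniqueness I would apply the Krull-Schmidt-Remak-Azumaya theorem (Fact~\ref{strong}) directly to the decomposition ${}_RR \cong \bigoplus V_i^{n_i}$: each simple $V_i$ is strongly indecomposable since its endomorphism ring $D_i$ is a division ring and in particular local. Hence the multiset of isomorphism classes of the $V_i$ together with the multiplicities $n_i$ is determined by $R$, and so are the division rings $D_i = \mathrm{End}(V_i)$. For the ``moreover'' clause, every indecomposable module over a semisimple ring is simple (since every module is a direct sum of simples), and every simple module is a quotient of $R$, hence by projectivity a direct summand of $R$, so isomorphic to some $V_i$. Simple modules are strongly indecomposable by Schur. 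Finally, the identification $V_{D_i}^{n_i} \cong M_{n_i}(D_i)$ as left $R$-modules is just the $i$-th factor of the product decomposition of $R$ read as a left $R$-module.

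The main obstacle I anticipate is bookkeeping around opposite rings: the isomorphism $\mathrm{End}({}_RR) \cong R^{op}$ combined with the appearance of $D_i^{op}$ inside $\mathrm{End}(V_i^{n_i})$ must be tracked carefully so that the division rings appearing in the final product are $D_i = \mathrm{End}(V_i)$ rather than their opposites, and so that the labelling of $V_{D_i}$ corresponds to the correct matrix factor. Once this is handled, everything else is a routine combination of the standard equivalences of semisimplicity, Schur's lemma, and Krull-Schmidt-Remak-Azumaya.
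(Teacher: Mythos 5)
This statement is quoted in the paper as a known fact with a citation to Lam's book; no proof is given there, and your argument is essentially the standard textbook proof found in the cited reference, so it is the "same approach" in the only meaningful sense. The argument is correct: decomposing ${}_RR$ into isotypic components, applying Schur's lemma, and computing $\mathrm{End}({}_RR)\cong R^{op}$ gives the matrix decomposition, and Krull--Schmidt--Remak--Azumaya (Fact~\ref{strong}) gives uniqueness since simple modules have division, hence local, endomorphism rings. On the one subtlety you flag: with endomorphisms written on the left, the division rings that end up in the matrix factors are the \emph{opposites} of the $\mathrm{End}(V_i)$, but since the opposite of a division ring is a division ring and the uniqueness argument identifies $\mathrm{End}(V_i)$ up to isomorphism in either convention, the statement as given is unaffected. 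Your identification of the strongly indecomposable modules with the simples $V_{D_i}$ is also correct, since over a semisimple ring indecomposable implies simple and every simple occurs as a summand of ${}_RR$.
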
 

We show that if a ring is semisimple, then the class of modules satisfies Hypothesis \ref{hyp1}.

\begin{lemma}\label{cor-mod}
 If $R$ is semisimple, then the class of left $R$-modules satisfies Hypothesis \ref{hyp1} with $\si \leq \text{card}(R) +\aleph_0$.
 \end{lemma}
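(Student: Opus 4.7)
The plan is to check the three conditions of Hypothesis \ref{hyp1} in turn, using the Wedderburn--Artin structure given by Fact \ref{wa}. Write $R \cong M_{n_1}(D_1) \times \cdots \times M_{n_m}(D_m)$ and let $V_{D_1}, \ldots, V_{D_m}$ be the strongly indecomposable $R$-modules listed in Fact \ref{wa}.

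For condition (1), I would first recall that over a semisimple ring every left module is semisimple, that is, a direct sum of simple modules. This is the standard equivalent characterization of semisimplicity (every module being projective forces every short exact sequence to split, so every submodule is a direct summand; this in turn implies every module decomposes as a direct sum of simples). Next, I would observe that every simple left $R$-module is isomorphic to one of $V_{D_1}, \ldots, V_{D_m}$: indeed a simple $R$-module is a quotient of $R$ by a maximal left ideal, and since $R$ is a finite product of the matrix rings $M_{n_i}(D_i)$, any such simple module is a simple module over one of the factors, which by Fact \ref{wa} must be a $V_{D_i}$. Since each $V_{D_i}$ is strongly indecomposable, condition (1) holds.

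Condition (3) is immediate because $K = R\text{-Mod}$ is the class of \emph{all} left $R$-modules, which is trivially closed under arbitrary direct sums.

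For condition (2), I want to bound $\|V_{D_i}\|$ by $\text{card}(R) + \aleph_0$. Since each $V_{D_i}$ is simple, it is a cyclic $R$-module (generated by any non-zero element), hence a quotient of $R$ as a left $R$-module. In particular $\|V_{D_i}\| \leq \text{card}(R) \leq \text{card}(R) + \aleph_0$ for every $i$. Since by (1) every strongly indecomposable module in $K$ is isomorphic to some $V_{D_i}$, this gives $\si \leq \text{card}(R) + \aleph_0$, as desired.

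The only substantive step is the first, namely that every module over a semisimple ring is semisimple; this is a classical ring-theoretic result that can be cited from any standard reference (e.g. \cite[2.5]{lam1}). Once that is in hand, identifying the simple modules with the $V_{D_i}$ via Fact \ref{wa} and bounding their cardinalities is routine.
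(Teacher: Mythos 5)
Your proof is correct and follows essentially the same route as the paper: the paper simply notes that over a semisimple ring every module is semisimple and then cites its earlier verification that the class of semisimple modules satisfies Hypothesis \ref{hyp1} (simples are strongly indecomposable, are cyclic hence of size at most $\text{card}(R)+\aleph_0$, and the class is closed under direct sums), which is exactly what you spell out. Your appeal to Wedderburn--Artin to identify the simples with the $V_{D_i}$ is harmless but not needed, since simplicity alone gives cyclicity and hence the cardinality bound.
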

 \begin{proof} Since $R$ is a semisimple ring,  every left $R$-module is semisimple (see e.g \cite[1.2]{fac}). 
 Then the assertion follows directly from Example \ref{ex!}.
 \end{proof}

%\begin{remark}\label{base}\
%\begin{itemize}
%\item If $R$ is left semisimple then a module is simple if an only if it is strongly indecomposable.
%\item Given a divisible ring $D$, $M_{n}(D)$ has a unique simple module (up to isomorphisms) $V_D$ such that $V_D^n \cong M_{n}(D)$.

%\end{itemize}
%\end{remark}

\begin{lemma}\label{mmod} Assume $R$ is an associative ring with unity. The following are equivalent.
\begin{enumerate}
\item $R\text{-Mod}$ is $\lambda$-categorical in all $\lambda > \text{card}(R) + \aleph_0$.
\item $R\text{-Mod}$ is $\lambda$-categorical in some $\lambda > \text{card}(R) + \aleph_0$.
\item $R \cong M_n(D)$ for $D$ a division ring and $n \geq 1$.
\end{enumerate}
\end{lemma}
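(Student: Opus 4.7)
The proof will proceed by showing $(1) \Rightarrow (2) \Rightarrow (3) \Rightarrow (1)$. The implication $(1) \Rightarrow (2)$ is immediate since $\lambda$-categoricity in all $\lambda > \text{card}(R) + \aleph_0$ trivially implies categoricity in some such $\lambda$. For $(3) \Rightarrow (1)$, if $R \cong M_n(D)$, then $R$ is semisimple with Wedderburn decomposition having exactly one factor. By Lemma \ref{cor-mod}, the class $R$-Mod satisfies Hypothesis \ref{hyp1} with $\si \leq \text{card}(R) + \aleph_0$, and by Fact \ref{wa} there is a unique strongly indecomposable module (namely $V_D$). Applying Lemma \ref{basicl} $(3) \Rightarrow (1)$ then yields $\lambda$-categoricity in all $\lambda > \text{card}(R) + \aleph_0$.

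The main work is $(2) \Rightarrow (3)$, which I would split into two steps: first showing that $R$ is semisimple, then showing that its Wedderburn decomposition has a single factor. For the first step, assume $\lambda$-categoricity holds for some $\lambda > \text{card}(R) + \aleph_0$. The key observation is that the free module $R^{(\lambda)}$ lies in $R$-Mod and has cardinality exactly $\lambda$. For any left ideal $I$ of $R$, the module $(R/I) \oplus R^{(\lambda)}$ also has cardinality $\lambda$ (since $\text{card}(R/I) \leq \text{card}(R) \leq \lambda$), so by $\lambda$-categoricity it is isomorphic to $R^{(\lambda)}$. Therefore $R/I$ is a direct summand of a free module, hence projective, and so the short exact sequence $0 \to I \to R \to R/I \to 0$ splits. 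This means $I$ is a direct summand of $R$ for every left ideal $I$, which is precisely the statement that $R$ is a semisimple left $R$-module, i.e., $R$ is a semisimple ring.

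Once semisimplicity is established, Lemma \ref{cor-mod} tells us $R$-Mod satisfies Hypothesis \ref{hyp1} with $\si \leq \text{card}(R) + \aleph_0$. Since we have $\lambda$-categoricity for some $\lambda > \text{card}(R) + \aleph_0 \geq \si$, Lemma \ref{basicl} $(2) \Rightarrow (3)$ gives that there is a unique strongly indecomposable module in $R$-Mod up to isomorphism. By Wedderburn-Artin (Fact \ref{wa}), the number of strongly indecomposable modules equals the number $m$ of matrix factors in the decomposition $R \cong M_{n_1}(D_1) \times \cdots \times M_{n_m}(D_m)$, so $m = 1$ and $R \cong M_n(D)$ for some division ring $D$ and $n \geq 1$.

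The most delicate point is the step showing $R/I$ is projective for every left ideal $I$; the rest is essentially bookkeeping combining Lemma \ref{basicl} and Fact \ref{wa}. The argument is especially clean because the presence of the free module $R^{(\lambda)}$ inside $R$-Mod lets us use categoricity to transfer a structural property (being isomorphic to $R^{(\lambda)}$, hence free) to arbitrary cyclic quotients, which is the bridge between the model-theoretic hypothesis and the ring-theoretic conclusion of semisimplicity.
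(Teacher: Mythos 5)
Your proposal is correct and follows essentially the same route as the paper: use $\lambda$-categoricity against the free module $R^{(\lambda)}$ to force small modules to be projective, conclude that $R$ is semisimple, and then combine the Wedderburn--Artin decomposition with the uniqueness of the strongly indecomposable module (via Lemma \ref{basicl} and Lemma \ref{cor-mod}) to get $m=1$. The only (harmless) variation is that you compare $(R/I)\oplus R^{(\lambda)}$ with $R^{(\lambda)}$ and invoke the ``every left ideal is a direct summand'' characterization of semisimplicity, whereas the paper compares $M^{(\lambda)}$ with $R^{(\lambda)}$ for $M$ finitely generated and cites the ``every finitely generated module is projective'' criterion.
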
 
\begin{proof}
We only need to show (2) implies (3) and (3) implies (1).

$(2) \Rightarrow (3)$: We show first that $R$ is a semisimple ring. It is enough to show that every finitely generated left $R$-modules is projective by \cite[2.8]{lam1}.  Let $M$ be a non-zero finitely generated left $R$-module, then $\| M \| \leq \text{card}(R) + \aleph_0$. Since $\lambda > \text{card}(R) + \aleph_0$, $M^{(\lambda)}$ is free by $\lambda$-categoricity and the fact that there is a free module of cardinality $\lambda$. Therefore, $M$ is projective, so $R$ is semisimple.

Since $R$ is semisimple, it follows from Fact \ref{wa} that $R \cong  M_{n_1}(D_1) \times \cdots \times M_{n_{m}}(D_{m})$ for some $m \in \mathbb{N}$, $D_1, \cdots, D_m$ division rings and $n_1, \cdots, n_m \in \mathbb{N}$. By Lemma \ref{cor-mod} and Lemma \ref{basicl} it follows that $R$-Mod has a unique strongly indecomposable module. Therefore, $m=1$  by Fact \ref{wa}.

$(3) \Rightarrow (1)$: Since $R$ is semisimple and  has a unique strongly indecomposable module by Fact \ref{wa}, the results follows from Lemma \ref{cor-mod} and Lemma \ref{basicl}. \end{proof}

\begin{remark}
The equivalence between (1) and (2) follows directly from Morley's Categoricity Theorem for countable rings and from \cite{sh31} for uncountable rings, the reason we did not simply quote Morley's Categoricity Theorem or Shelah's generalization is to provide an algebraic argument on how to transfer categoricity. 
\end{remark}

\begin{remark}
The direction (3) implies (1) is known to hold, see for example the last paragraph of \cite{piro}. The direction (1) implies (3) can be obtained using \cite[Theorem I]{givant} and is folklore. 
\end{remark}

For commutative rings the above result can be improved to the following.

\begin{cor}\label{mod-c} Assume $R$ is a commutative ring with unity. The following are equivalent.
\begin{enumerate}
\item $R\text{-Mod}$ is $\lambda$-categorical in some (all) $\lambda > \text{card}(R) + \aleph_0$.
\item $R$ is a field.
\item All left $R$-modules are free.
\end{enumerate}
\end{cor}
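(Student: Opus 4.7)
The plan is to use Lemma \ref{mmod} as a black box for the main content, and then exploit commutativity to refine the conclusion down to a field. Since the whole equivalence is essentially packaged in the previous lemma, the argument will be short.

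First I would handle (1) $\Leftrightarrow$ (2). By Lemma \ref{mmod}, condition (1) is equivalent to $R \cong M_n(D)$ for some division ring $D$ and some $n \geq 1$. Since $R$ is assumed commutative, the matrix ring $M_n(D)$ must be commutative, which forces $n = 1$ and $D$ to be commutative; i.e., $R$ is a field. Conversely, if $R$ is a field, then $R = M_1(R)$ has the required form, so Lemma \ref{mmod} delivers (1).

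Next I would show (2) $\Leftrightarrow$ (3). The direction (2) $\Rightarrow$ (3) is standard: every vector space over a field has a basis, hence is free. For the converse (3) $\Rightarrow$ (2), let $\mathfrak{m}$ be any maximal ideal of $R$ (which exists by Zorn). By hypothesis the cyclic module $R/\mathfrak{m}$ is free as an $R$-module, say $R/\mathfrak{m} \cong R^{(\kappa)}$ for some cardinal $\kappa \geq 1$. The ideal $\mathfrak{m}$ annihilates $R/\mathfrak{m}$, hence annihilates $R^{(\kappa)}$; but $1 \in R$, so the annihilator of any non-zero free $R$-module is $0$. Therefore $\mathfrak{m} = 0$, so $R$ has a unique maximal ideal equal to $(0)$, which means $R$ is a field.

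There is no serious obstacle here; essentially all of the work was already done in Lemma \ref{mmod}, and the commutativity assumption trivially collapses the Wedderburn--Artin decomposition to a single factor. The only mildly delicate step is (3) $\Rightarrow$ (2), but the annihilator argument above bypasses any need to reprove a version of Wedderburn--Artin.
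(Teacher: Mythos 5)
Your proposal is correct and matches what the paper intends: Corollary \ref{mod-c} is stated without proof as an immediate consequence of Lemma \ref{mmod}, with commutativity collapsing $M_n(D)$ to $n=1$ and $D$ a field, exactly as you argue. Your annihilator argument for (3) $\Rightarrow$ (2) is a clean way to close the cycle and introduces nothing beyond standard facts.
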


For finite rings we can also get the following improvement.

\begin{cor}\label{finte} Assume $R$ is an associative and finite ring with unity. The following are equivalent.
\begin{enumerate}
\item $R\text{-Mod}$ is $\lambda$-categorical in some (all) $\lambda >  \aleph_0$.
\item  $R \cong M_n(D)$ for $D$ a field and $n \geq 1$.
\item All infinite left $R$-modules are free.
\item  $R\text{-Mod}$ is $\lambda$-categorical in all $\lambda \geq  \aleph_0$.
\end{enumerate}
\end{cor}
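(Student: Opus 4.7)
The plan is to deduce the result from Lemma \ref{mmod} together with a few elementary observations about finite rings, arranging the implications as $(1) \Leftrightarrow (2)$ and $(2) \Rightarrow (3) \Rightarrow (4) \Rightarrow (1)$.

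For $(1) \Leftrightarrow (2)$, since $R$ is finite we have $\text{card}(R)+\aleph_0 = \aleph_0$, so Lemma \ref{mmod} applies and gives that (1) is equivalent to $R \cong M_n(D)$ for some division ring $D$ and $n \geq 1$. The only extra ingredient is that $D$ must be finite (it embeds into $M_n(D) \cong R$), and by Wedderburn's little theorem every finite division ring is a field, giving (2). Conversely, (2) is a special case of the condition in Lemma \ref{mmod}.

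For $(2) \Rightarrow (3)$, assume $R \cong M_n(D)$ for $D$ a finite field. By Lemma \ref{cor-mod} and Fact \ref{wa}, $R$-Mod satisfies Hypothesis \ref{hyp1} and has a unique strongly indecomposable module $V_D$ (finite, since $D$ is finite), and $R \cong V_D^{n}$ as a left $R$-module. Hence every left $R$-module is isomorphic to $V_D^{(\theta)}$ for some cardinal $\theta$. If $M$ is infinite, then $\theta$ must be infinite, and
\[
R^{(\theta)} \;\cong\; V_D^{(n\theta)} \;\cong\; V_D^{(\theta)} \;\cong\; M,
\]
so $M$ is free.

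For $(3) \Rightarrow (4)$, any two infinite modules $M, N$ of the same cardinality $\lambda \geq \aleph_0$ are both free, say $M \cong R^{(\theta_1)}$ and $N \cong R^{(\theta_2)}$. Since $R$ is finite and $\lambda$ is infinite, $\theta_1 = \theta_2 = \lambda$, so $M \cong N$, and there exists a free module of size $\lambda$. Finally, $(4) \Rightarrow (1)$ is immediate. The only step that might be viewed as the main obstacle is the extraction of Wedderburn's little theorem in $(1) \Leftrightarrow (2)$ and the verification that $V_D^{(\theta)} \cong R^{(\theta)}$ for infinite $\theta$, but both are entirely routine.
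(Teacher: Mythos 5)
Your proposal is correct and follows essentially the same route as the paper: the cycle $(1)\Rightarrow(2)\Rightarrow(3)\Rightarrow(4)\Rightarrow(1)$, using Lemma \ref{mmod} plus Wedderburn's little theorem for $(1)\Rightarrow(2)$, the unique strongly indecomposable module $V_D$ with $R\cong V_D^n$ and the absorption $V_D^{(n\theta)}\cong V_D^{(\theta)}$ for $(2)\Rightarrow(3)$, and the isomorphism of free modules of equal infinite cardinality for $(3)\Rightarrow(4)$. The only difference is that you spell out a few routine cardinality checks the paper leaves implicit.
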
 
\begin{proof}
$(1) \Rightarrow (2)$: Follows from Theorem \ref{mmod} and Wedderburn little theorem that finite division rings are fields.

$(2) \Rightarrow (3)$: Let $M$ be an infinite $R$-module of cardinality $\lambda$. Let $V_D$ be the unique strongly indecomposable module of $R$. It is easy to see that $R \cong V_D^n$ and that $M \cong V_D^{(\lambda)}$. Observe that $V_D^{(\lambda)} \cong (V_D^n)^{(\lambda)}$, so $M \cong R^{(\lambda)}$. Therefore, $M$ is free.

$(3) \Rightarrow (4)$: Any two free modules of the same cardinality are isomorphic.

$(4) \Rightarrow (1)$: Clear.  \end{proof}

\begin{remark} $(1)$ if and only if $(4)$ follows directly from the Main Theorem of \cite{bala}.
\end{remark} 

\subsection{Abstract elementary classes} \emph{Abstract elementary classes} (AECs for short) were introduced  by Shelah in \cite{sh88}. An \emph{AEC} is a pair $\K=(K \lea)$ where $K$ is a class of structures and $\lea$ is a partial order on $K$ extending the substructure relation such that $\K$ is closed under direct limits and satisfies the coherence property and an instance of the Downward L\"{o}wenheim-Skolem theorem. The reader can consult the definition in \cite[4.1]{baldwinbook09}. 

The main test question in the development of the Classification Theory for Abstract Elementary Classes is the  following far-reaching conjecture  asserted by Shelah in \cite[6.13.(3)]{sh704}.\footnote{Instances of Shelah's Categoricity Conjecture date back to the mid-senventies \cite{sh87a}.}
  % In this paper we will focus on classes of modules, so isomorphism can be understood as isomorphisms in the sense of module theory. 

\begin{sec-1} Assume $\K$ is an abstract elementary class.
If $\K$ is $\lambda$-categorical in \emph{some} cardinal $\lambda \geq \beth_{(2^{\LS(\K)})^+}$, then $\K$ is $\mu$-categorical in \emph{all} cardinals $\mu \geq \beth_{(2^{\LS(\K)})^+}$.\footnote{Recall that $\beth_0 = \aleph_0$, $\beth_{\alpha + 1} = 2^{\beth_\alpha}$ and $\beth_\delta = sup_{\alpha < \delta} \beth_\alpha$ for $\delta$ a limit ordinal.}
\end{sec-1}  

 Despite many approximations and close to 3000 pages of published work towards a solution to Shelah's Categoricity Conjecture, the conjecture remains open (even the original version from the seventies for $L_{\omega_1, \omega}$). In this paper, we will use one such an approximation due to Vasey \cite[9.8]{vaseyII}, but before we introduce his result we present a few more preliminaries. 
 
 There is a semantic notion of type on AECs called \emph{Galois-types}, see for example \cite[8.7]{baldwinbook09} for the definition. Galois-types are equivalence classes over 
certain triples, so they might not be determined by their restrictions to smaller subsets, when this happens we say that the AEC is \emph{tame}. More precisely, $\K$ is \emph{$\LS(\K)$-tame} if for any $M \in \K$ and $p, q$ Galois-types (of length one), if $p \neq q$ then there is $N \lea M$ such that $p\upharpoonright{N} \neq q\upharpoonright{N}$ and $\| N \| = \LS(\K)$. Finally, an AEC has the amalgamation property if any span $M \lea N_1, N_2$ can be completed to a commutative square.

\begin{fact}[{\cite[9.8]{vaseyII}}]\label{sebsc}Assume $\K$ is an abstract elementary class which is  $LS(\K)$-tame and has arbitrarily large models and the amalgamation property. If $\K$ is $\lambda$-categorical in some $\lambda > LS(\K)$, then $\K$ is $\lambda$-categorical in all cardinals of the form $\beth_\delta$ where $\delta = (2^{\LS(\K)})^+ \alpha $ (ordinal multiplication) for some ordinal $\alpha > 0$.
\end{fact}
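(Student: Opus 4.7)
The plan is to leverage the machinery of Shelah's categoricity transfer in tame AECs with amalgamation, proceeding in three stages: extract superstable-like behaviour from $\lambda$-categoricity, build a global good frame witnessing a well-behaved independence relation, and transfer categoricity to cardinals where saturated models can be constructed cleanly. First I would use $\lambda$-categoricity together with the amalgamation property and arbitrarily large models to obtain Galois-stability: Shelah's classical argument (transported to AECs by Grossberg--VanDieren) shows that $\lambda$-categoricity for $\lambda > \LS(\K)$ yields Galois-stability in every cardinal $\mu$ with $\LS(\K) \leq \mu < \lambda$ and $\mu = \mu^{\LS(\K)}$. Combined with $\LS(\K)$-tameness, this stability gives control over $\mu$-splitting, and by work of Grossberg--VanDieren and Vasey one obtains uniqueness of limit models in a cofinal sequence of cardinals below $\lambda$, essentially a superstability phenomenon.

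Next I would build a good $\mu$-frame on $\K$ for some $\mu$ bounded by $\beth_{(2^{\LS(\K)})^+}$. The existence of such a frame under tameness, amalgamation, and categoricity is the technical heart of Vasey's program: one combines the frame construction from Shelah's book on AECs with tameness-based locality arguments to verify the frame axioms (extension, uniqueness, local character, symmetry) at the right cardinal. Once the good frame is in place, $\LS(\K)$-tameness automatically extends it to every $\mu' \geq \mu$, providing a uniform notion of non-forking and a workable notion of saturation across all the cardinals of interest.

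The final step is the categoricity transfer itself. A good frame implies that the class is superstable in the sense that unions of $\lea$-increasing chains of saturated models of sufficiently large cofinality are saturated. Since the unique model of cardinality $\lambda$ must be saturated (else two non-isomorphic models of size $\lambda$ would exist, contradicting categoricity), one then argues by induction on $\alpha$ that the model of cardinality $\beth_{(2^{\LS(\K)})^+ \alpha}$ built as a union of a tower of saturated models is itself saturated, and hence isomorphic to every other model of that cardinality. The hard part will be the frame construction: establishing symmetry and extension globally requires delicate tower arguments and is where most of the technical content resides, together with the cardinal arithmetic that forces the restriction to $\beth_\delta$ with $\delta$ a multiple of $(2^{\LS(\K)})^+$ --- this is precisely the cofinality needed for the induction to produce genuinely saturated models at each step and to absorb the Hanf-number bound $\beth_{(2^{\LS(\K)})^+}$.
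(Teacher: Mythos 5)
There is nothing in the paper to compare your argument against: the statement is imported verbatim as a \emph{Fact} from \cite[9.8]{vaseyII}, and the paper explicitly remarks that Vasey's result ``is obtained using sophisticated machinery from AECs such as good $\lambda$-frames'' and that it will only be \emph{applied} to classes of modules, not reproved. So the relevant question is whether your proposal constitutes a proof, and it does not: it is a roadmap in which every load-bearing step is itself a deep theorem that is named rather than established. The extraction of Galois-stability from categoricity, the construction of a good frame below $\beth_{(2^{\LS(\K)})^+}$ together with the verification of symmetry and local character, the upward globalization of the frame via tameness, and the transfer of saturation along towers are each substantial results (spread over Shelah's books and several papers of Grossberg--VanDieren and Vasey), and you defer all of them with phrases like ``the technical heart of Vasey's program'' and ``the hard part will be the frame construction.'' Reconstructed this way, the proposal is essentially a citation of the same literature the paper cites, which is legitimate as a use of the Fact but is not a proof of it.

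Beyond the global incompleteness, a few specific assertions need repair. First, the classical stability-from-categoricity argument gives Galois-stability in \emph{every} $\mu$ with $\LS(\K)\leq\mu<\lambda$; your extra hypothesis $\mu=\mu^{\LS(\K)}$ is not needed and weakens the conclusion you later rely on. Second, the claim that ``the unique model of cardinality $\lambda$ must be saturated, else two non-isomorphic models of size $\lambda$ would exist'' is not automatic when $\lambda$ is singular of small cofinality: one must first prove that a saturated (or suitably homogeneous) model of size $\lambda$ exists, which itself requires the superstability machinery you are in the middle of building, so the argument as phrased is circular at that point. Third, $\cof(\beth_\delta)=(2^{\LS(\K)})^+$ only when $\alpha$ is a successor ordinal; for limit $\alpha$ the cofinality of $\delta=(2^{\LS(\K)})^+\alpha$ is $\cof(\alpha)$, which can be $\omega$, so the stated reason for restricting to these cardinals (``precisely the cofinality needed'') does not cover the full range of $\alpha>0$ in the statement; the actual role of divisibility by $(2^{\LS(\K)})^+$ in Vasey's argument is tied to the Hanf number computation and the omitting-types/EM-model analysis, not merely to cofinality.
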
 

\begin{remark}
Vasey's result is obtained using sophisticated machinery from AECs such as good $\lambda$-frames. In this paper, we will simply apply the result to some AECs of modules which are known to satisfy the hypothesis of the theorem by earlier work on abstract elementary classes of modules. 
\end{remark}

This is all the theory of AECs that we will use in this paper. An introduction to AECs from an algebraic perspective is given in \cite{m4}. Classical texts on AECs include \cite{baldwinbook09},  \cite{grossberg2002} and \cite{shelahaecbook}.

\section{Main results} 

In this section we study the categoricity spectrum in the class of locally pure-injective modules, absolutely pure modules, and flat modules. We characterize algebraically when these classes are categorical in a tail of cardinals.

\subsection{Locally pure-injective modules} In this subsection we study the class of locally pure-injective modules. Recall that $M$ is a \emph{pure submodule} of $N$, denoted by $M \leq_{p} N$, if $M$ is a submodule of $N$ and for every $L$ right $R$-module $L \otimes M \to L \otimes N$ is a monomorphism. Equivalently $M$ is a submodule of $N$ and positive primitive formulas (existentially quantified systems of linear equations) are preserved between $M$ and $N$.

\begin{defin} A left $R$-module $M$ is \emph{locally pure-injective} if for every finite subset of $M$ there is a pure-injective pure submodule of $M$ containing it. We denote the class of locally pure-injective  modules by $R\text{-l-pi}$.
\end{defin}

A ring is \emph{left pure-semisimple} if every left module is pure-injective. It is clear that if a ring is pure-semisimple then the class of locally pure-injective modules is the same as the class of modules.  

\begin{lemma}\label{cor-lpi}
 If $R$ is left pure-semisimple, then the class of locally pure-injective $R$-modules satisfies Hypothesis \ref{hyp1} with $\si \leq \text{card}(R) +\aleph_0$.
 \end{lemma}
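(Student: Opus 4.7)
The plan is to reduce everything to well-known structural results about left pure-semisimple rings, after first observing that the hypothesis trivializes the class in question.

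First, I would note that when $R$ is left pure-semisimple, every left $R$-module $M$ is itself pure-injective, so $M$ is trivially a pure-injective pure submodule of itself containing any finite subset. Hence $R\text{-l-pi}$ coincides with $R$-Mod, and condition (3) of Hypothesis \ref{hyp1} is immediate since direct sums of $R$-modules are $R$-modules.

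For condition (1), I would invoke the classical characterization of left pure-semisimple rings: over such a ring, every left module decomposes as a direct sum of indecomposable modules (this is essentially the definition together with Zimmermann-Huisgen and others; it appears, e.g., in Prest's book or \cite[Chapter 2]{fac}). Each indecomposable summand is pure-injective because all modules are, and indecomposable pure-injective modules have local endomorphism rings (a standard fact, analogous to the injective case already cited in Remark \ref{ermk}; see \cite[2.27]{fac} or thereabouts). Therefore every indecomposable summand is strongly indecomposable, yielding condition (1).

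For condition (2), the key input is the theorem that every indecomposable left module over a left pure-semisimple ring is finitely generated (originally due to Auslander for artinian rings and extended in this generality; this is part of the structure theory of pure-semisimple rings). Since a finitely generated left $R$-module is a quotient of $R^{n}$ for some $n < \omega$, its cardinality is at most $\text{card}(R) + \aleph_0$, giving $\si \leq \text{card}(R) + \aleph_0$.

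The main obstacle is really just bookkeeping: marshaling the right citation for the fact that indecomposable modules over a left pure-semisimple ring are finitely generated, and the fact that indecomposable pure-injective modules have local endomorphism rings. Neither requires any new argument; both are textbook results, so the proof amounts to a short verification of the three clauses of Hypothesis \ref{hyp1} combined with the observation that $R\text{-l-pi} = R\text{-Mod}$ in this setting.
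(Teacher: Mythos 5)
Your proof is correct, but it routes the verification differently from the paper. The paper's argument is a one-line reduction: over a left pure-semisimple ring every module is pure-injective, hence every (locally pure-injective) module is $\Sigma$-pure-injective, so all three clauses of Hypothesis \ref{hyp1} follow from Example \ref{ex!}.(2), where condition (1) is \cite[2.29]{fac} and the bound $\si \leq \text{card}(R)+\aleph_0$ comes from closure of $\Sigma$-pure-injective modules under pure submodules (a small pure submodule of a strongly indecomposable $\Sigma$-pure-injective module is pure-injective, hence a direct summand, hence everything). You instead verify the clauses directly from the structure theory of left pure-semisimple rings: decomposition into indecomposables, locality of endomorphism rings of indecomposable pure-injectives, and finite generation of indecomposables. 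All of these are indeed standard and your identification of $R\text{-l-pi}$ with $R$-Mod is exactly the observation the paper relies on. The trade-off is that your cardinality bound leans on the theorem that indecomposables over a left pure-semisimple ring are finitely generated, which is a stronger (though true and classical) input than the soft pure-submodule argument the paper uses; in exchange you get the sharper conclusion that the strongly indecomposable objects are actually finitely generated, not merely of bounded cardinality. Either route is acceptable here.
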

 \begin{proof} Since $R$ is a left pure-semisimple ring, every locally pure-injective $R$-module is $\Sigma$-pure-injective. Then the assertion follows directly from Example \ref{ex!}.\end{proof} 
 
In order to use Fact \ref{sebsc} we will need the following model-theoretic result.

\begin{fact}[{\cite[3.3, 3.5, 3.10]{maz2}}]\label{lpi-easy}
$\K= (R\text{-l-pi}, \leq_p)$ is an abstract elementary class with $\text{card}(R) + \aleph_0 \leq \LS(\K) \leq 2^{\ccr}$ that is $\LS(\K)$-tame and has arbitrarily large models and the amalgamation property. 
\end{fact}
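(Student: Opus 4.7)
The plan is to verify the four assertions (AEC axioms, Löwenheim–Skolem bound, tameness, amalgamation, arbitrarily large models) essentially in the order they are stated, leaning throughout on the fact that the larger class $(R\text{-Mod}, \leq_p)$ is already a well-behaved AEC and that a module is locally pure-injective iff every finite tuple in it lives in a pure-injective pure submodule.

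For the AEC axioms I would first handle closure under directed unions. If $(M_i)_{i \in I}$ is a $\leq_p$-directed system of locally pure-injective modules with union $M$, then each $M_i$ is pure in $M$ (a standard fact for $R$-modules), so given any finite tuple $\bar a \in M$, choose $i$ with $\bar a \in M_i$, pick a pure-injective pure submodule $N \leq_p M_i$ containing $\bar a$, and observe that $N$ is pure in $M$ by transitivity of purity. Coherence reduces to the corresponding fact in $R\text{-Mod}$ and is immediate from the definition of purity via pp-formulas. For the Löwenheim–Skolem bound I would, given $A \subseteq M \in \K$, iteratively adjoin for each finite $\bar a \in A$ a pure-injective pure submodule of $M$ containing $\bar a$; the hard part is bounding the cardinality of the pure-injective submodule chosen for each $\bar a$, but one may take the pure-injective hull of $\bar a$ inside $M$, whose size is bounded by $2^{\ccr}$ (since the pure-injective hull of a module of size $\kappa$ has cardinality at most $2^{\kappa + |R| + \aleph_0}$). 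Closing under iteration of length $\omega$ keeps us below $2^{\ccr}$ and yields $\LS(\K) \leq 2^{\ccr}$; the lower bound $\text{card}(R) + \aleph_0$ is forced by the language.

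Arbitrarily large models follow because every pure-injective module is trivially locally pure-injective, and one obtains pure-injective modules of every cardinality by taking pure-injective hulls of free modules of arbitrary rank. For amalgamation I would proceed as follows: given $M \leq_p N_1, N_2$ with all three in $\K$, form the pushout $P$ of $N_1, N_2$ over $M$ in $R\text{-Mod}$. The pushout of pure embeddings is a pure embedding, so $N_1, N_2 \leq_p P$. Now take the pure-injective hull $PE(P)$; then $P \leq_p PE(P)$, hence $N_1, N_2 \leq_p PE(P)$, and $PE(P)$ is pure-injective, so in $\K$.

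Tameness is the cleanest part: for AECs of modules with pure embeddings, Galois-types (of length one) correspond, over a sufficiently rich ambient model, to pp-types, and each pp-formula mentions only finitely many parameters, so any two distinct Galois-types are separated by some pp-formula using finitely many parameters, which already lie in a submodule of size $\text{card}(R) + \aleph_0 \leq \LS(\K)$. The main obstacle I expect is really the Löwenheim–Skolem bound, since it is there that one has to make the jump from ``every finite tuple has a small pure-injective pure extension'' to ``every subset has a locally pure-injective pure overmodule of bounded size,'' and it is precisely the $2^{\ccr}$ bound on pure-injective hulls of individual tuples that forces the bound stated in the fact.
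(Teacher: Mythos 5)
This statement is imported verbatim from \cite[3.3, 3.5, 3.10]{maz2} and the paper gives no proof of it, but your sketch is a correct reconstruction of the arguments used there: directed unions and coherence via transitivity of purity, the L\"{o}wenheim--Skolem bound $2^{\ccr}$ via hulls of finite tuples, amalgamation via pushouts followed by pure-injective envelopes, and tameness via the identification of Galois types with pp-types. Two spots deserve a word of care when writing this up: the ``pure-injective hull of $\bar a$ inside $M$'' must be taken as the hull of $\bar a$ inside a pure-injective pure submodule $N \leq_p M$ supplied by local pure-injectivity (it is then a direct summand of $N$, hence pure in $M$, of size at most $2^{\ccr}$), and the Galois-type/pp-type correspondence used for tameness requires that the class be closed under direct sums, which holds here by \cite[2.4]{zimm2}.
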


\begin{theorem}\label{lpi} Assume $R$ is an associative ring with unity. The following are equivalent.
\begin{enumerate}
\item $R\text{-l-pi}$ is $\lambda$-categorical in all $\lambda > \text{card}(R) + \aleph_0$.
\item $R\text{-l-pi}$ is $\lambda$-categorical in some $\lambda > 2^{\text{card}(R) + \aleph_0}$.
\item $R \cong M_n(D)$ for $D$ a division ring and $n \geq 1$.
\end{enumerate}
\end{theorem}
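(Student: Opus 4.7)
The plan is to establish (1) $\Rightarrow$ (2) $\Rightarrow$ (3) $\Rightarrow$ (1). The implication (1) $\Rightarrow$ (2) is immediate, since $2^{\text{card}(R)+\aleph_0} > \text{card}(R)+\aleph_0$.

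For (3) $\Rightarrow$ (1), suppose $R \cong M_n(D)$ with $D$ a division ring. Then $R$ is semisimple, and in particular left pure-semisimple, so Lemma \ref{cor-lpi} applies and $R\text{-l-pi}$ satisfies Hypothesis \ref{hyp1} with $\si \leq \text{card}(R)+\aleph_0$. By Fact \ref{wa}, $R$ has a unique strongly indecomposable module up to isomorphism, and the implication (3)$\Rightarrow$(1) of Lemma \ref{basicl} delivers categoricity in every $\lambda > \text{card}(R)+\aleph_0$.

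The substantive direction is (2) $\Rightarrow$ (3). By Fact \ref{lpi-easy}, $\K = (R\text{-l-pi}, \leq_p)$ is a tame AEC with amalgamation, arbitrarily large models, and $\LS(\K) \leq 2^{\text{card}(R)+\aleph_0}$. Since the given categoricity cardinal $\lambda$ satisfies $\lambda > 2^{\text{card}(R)+\aleph_0} \geq \LS(\K)$, Fact \ref{sebsc} promotes the hypothesis to categoricity in a proper class of cardinals of the form $\beth_\delta$. Every indecomposable pure-injective $R$-module $U$ is strongly indecomposable by Remark \ref{ermk}, has cardinality at most $2^{\text{card}(R)+\aleph_0}$, and satisfies $U^{(\mu)} \in R\text{-l-pi}$ for every $\mu$: any finite subset of $U^{(\mu)}$ sits inside a finite direct summand $U^n$, which is pure-injective and pure in the whole sum. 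Given two non-isomorphic indecomposable pure-injectives $U_1, U_2$ and a categoricity cardinal $\mu = \beth_\delta$ above both their cardinalities, $U_1^{(\mu)}$ and $U_2^{(\mu)}$ would both lie in $R\text{-l-pi}$ with cardinality $\mu$, hence be isomorphic; Krull-Schmidt-Remak-Azumaya (Fact \ref{strong}) would then force $U_1 \cong U_2$, a contradiction. Hence there is a unique indecomposable pure-injective $R$-module $U$ up to isomorphism.

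To finish, I would upgrade this uniqueness to left pure-semisimplicity of $R$. The plan is to run the same $U^{(\mu)}$-versus-$N$ comparison for $N$ built from arbitrary pure-injectives (and from pure-injective envelopes of arbitrary modules), combined with Fact \ref{strong}, in order to rule out any superdecomposable pure-injective and to establish $\Sigma$-pure-injectivity of $U$, so that every $R$-module ends up pure-injective. With $R$ then left pure-semisimple, $R\text{-l-pi}$ coincides with $R\text{-Mod}$, and Lemma \ref{mmod} applied to the original hypothesis yields $R \cong M_n(D)$. The main obstacle I foresee is precisely this last upgrade: moving from the KSRA uniqueness of indecomposable pure-injectives to full pure-semisimplicity requires invoking structural results on pure-injective modules (no-superdecomposable-part, $\Sigma$-pure-injectivity of the unique indecomposable) and careful cardinality bookkeeping to stay within the categoricity cardinals supplied by Fact \ref{sebsc}.
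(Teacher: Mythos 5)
Your treatment of (1)$\Rightarrow$(2) and (3)$\Rightarrow$(1) is fine and matches the paper, and your opening move for (2)$\Rightarrow$(3) --- invoking Fact \ref{lpi-easy} and Fact \ref{sebsc} to get categoricity in arbitrarily large cardinals of the form $\beth_\delta$ --- is exactly the paper's strategy. The problem is that the heart of the argument, deducing that $R$ is left pure-semisimple, is not actually carried out. You prove that there is a unique indecomposable pure-injective $U$ up to isomorphism, and then state a \emph{plan} to upgrade this to pure-semisimplicity by ruling out superdecomposable pure-injectives and establishing $\Sigma$-pure-injectivity of $U$. This is precisely where the work lies, and the route you sketch is genuinely obstructed: Fact \ref{strong} only applies to modules that \emph{are} direct sums of strongly indecomposables, so it gives you no purchase on an arbitrary locally pure-injective $N$ of size $\mu$ (which need not even be pure-injective, let alone decomposable), and nothing in your comparison rules out a superdecomposable summand of $PE(N)$. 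As written, the proof is incomplete at its crucial step.

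The paper avoids this detour entirely and never needs uniqueness of indecomposable pure-injectives for this stage. Given an arbitrary non-zero module $M$, it chooses a categoricity cardinal $\mu=\beth_\delta$ with $\delta=(2^{\LS(\K)})^+\cdot(\|PE(M)\|+1)$, takes \emph{any} locally pure-injective $N$ of size $\mu$, and uses Ziegler's bound $\|PE(N)\|\le \mu^{\text{card}(R)+\aleph_0}=\mu$ (the equality because $\cof(\beth_\delta)=(2^{\LS(\K)})^+>\text{card}(R)+\aleph_0$) to see that $PE(N)$ is a \emph{pure-injective} member of the class of size exactly $\mu$. Categoricity then forces $(PE(M)^{(\mu)})^{(\aleph_0)}\cong PE(N)$, so $PE(M)^{(\mu)}$ is $\Sigma$-pure-injective, and since $M\le_p PE(M)$ and $\Sigma$-pure-injectives are closed under pure submodules, $M$ itself is pure-injective. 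Thus every module is pure-injective, $R$ is left pure-semisimple, $R\text{-l-pi}=R\text{-Mod}$, and Lemma \ref{mmod} finishes. The lesson is that the comparison should be made against the pure-injective \emph{envelope} of an arbitrary model of the categoricity cardinal, with the cardinal chosen large enough and of the right cofinality so that taking envelopes does not increase cardinality; no decomposition theory is needed until after pure-semisimplicity is in hand.
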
 
\begin{proof} We only need to show (2) implies (3) and (3) implies (1).

$(2) \Rightarrow (3)$: Let $\K=(R\text{-l-pi}, \leq_p)$.  Since $\K$ is an  abstract elementary class that is $\LS(\K)$-tame and has arbitrarily large models and the amalgamation property by Fact \ref{lpi-easy},  $R$-l-pi is  $\beth_\delta$-categorical in every $\delta$ divisible by $(2^{\LS(\K)})^+$ by Fact \ref{sebsc}.

We show that $R$ is left pure-semisimple. Let $M$ be a non-zero left $R$-modules and $PE(M)$ be its pure-injective envelope.  Let $\mu =\beth_\delta$ where $\delta = (2^{\LS(\K)})^+  (\| PE(M)  \| + 1)$ (taking ordinal product).  Let $N$ be a locally pure-injective module of size $\mu$, then $\| PE(N) \| \leq \beth_\delta^{\text{card}(R) + \aleph_0}=\beth_\delta=\mu$. The inequality follows from \cite[3.11]{ziegler} as we are taking the pure-injective envelope and the equality from the fact that $cf(\beth_\delta)= (2^{\LS(\K)})^+  > \ccr$.  Hence $PE(N)$  has cardinality $\mu$. Moreover, $(PE(M)^{(\mu)})^{(\aleph_0)}$ is locally pure-injective because locally pure-injective modules are closed under direct sums \cite[2.4]{zimm2}. As  $\|  PE(M) \| < \mu$, $(PE(M)^{(\mu)})^{(\aleph_0)} \cong PE(N)$ by $\mu$-categoricity. Hence $PE(M)^{(\mu)}$ is $\Sigma$-pure-injective. Therefore, $M$ is  pure-injective as $\Sigma$-pure-injective modules are closed under pure submodules and $M \leq_p PE(M)$. Therefore, $R$ is left pure-semisimple.

Since $R$ is left pure-semisimple then the class of locally pure-injective $R$-modules is the same as the class of $R$-modules. Then the result follows directly from Lemma \ref{mmod}.

$(3) \Rightarrow (1)$: Since $R \cong M_n(D)$, $R$ is semisimple. Hence $R$ is left pure-semisimple. Then the result follows directly from Lemma \ref{mmod}. \end{proof}

\begin{remark}
The equivalence between (1) and (3) can be obtained without using Fact \ref{sebsc}. The proof of (1) implies (3) can be done as (2) implies (3) but there is no need to use  Fact \ref{sebsc} as by assumption the class is $\lambda$-categorical in all $\lambda > \text{card}(R) + \aleph_0$.
\end{remark}

\begin{remark} If there is an infinite left $R$-module which is not locally pure-injective, then 
the class of locally pure-injective modules is not first-order axiomatizable. The assertion follows from the fact that $M$ is an elementary substructure of $PE(M)$ for every module $M$ by \cite{sab}. An example of such a ring is $\mathbb{Z}$ \cite[2.10]{zimm2}. Therefore, the equivalence between (1) and (2) of Theorem \ref{lpi} is an instance of Shelah's Categoricity Conjecture which does not follow from Morley's Categoricity Theorem or Shelah's generalization. Moreover, the result does not follow directly from any of the positive results known for AECs.
\end{remark}

\begin{remark}
A natural question to ask is if the categoricity threshold can be lowered to $\ccr$ in the above theorem. 
\end{remark}

\subsection{Absolutely pure modules}  In this subsection we study the class of absolutely pure modules.  

\begin{defin} A left $R$-module $M$ is \emph{absolutely pure} if for every $N$, if $M$ is a submodule of $N$ then $M$ is a pure submodule of $N$. We denote the class of all absolutely pure $R$-modules by $R$-AbsP.
 \end{defin}
 
 Recall that $M$ is absolutely pure  if and only if $Ext_R^1(N, M) =0$ for every finitely presented module $N$. It is clear that injective modules are absolutely pure. We show that if a ring is left noetherian, then the class of absolutely pure modules satisfies Hypothesis \ref{hyp1}.

% \begin{fact}\label{absp-b} Assume $R$ is an associative ring with unity.
% \begin{enumerate}
 %\item (\cite[2.12]{fac}) An injective module is indecomposable if and only if it is strongly indecomposable. 
 %\item (\cite[4.4.17]{prest09}) $R$ is left noetherian if and only if every absolutely pure left $R$-module is injective.
% \item (\cite[3.48]{lam2}) If $R$ is left noetherian, then every absolutely pure (injective) left $R$-modules is a direct sum of indecomposable injective modules.
 %\end{enumerate}
% \end{fact}

 \begin{lemma}\label{cor-ab}
 If $R$ is left noetherian, then the class of absolutely pure $R$-modules satisfies Hypothesis \ref{hyp1} with $\si \leq \text{card}(R) +\aleph_0$.
 \end{lemma}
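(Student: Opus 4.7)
The plan is to show that the left noetherian hypothesis collapses absolutely pure modules to injective modules, at which point the classical structure theory of injectives over a noetherian ring gives Hypothesis \ref{hyp1} essentially for free. The guiding observation is that each of the three conditions in Hypothesis \ref{hyp1} corresponds to a well-known algebraic theorem available under the noetherian assumption.

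First I would invoke the Megibben--Maddox theorem: over a left noetherian ring every absolutely pure module is injective (the converse being automatic), so $R\text{-AbsP}$ and $R\text{-Inj}$ coincide as classes. Condition (3) is then a direct consequence of the Bass--Papp theorem, which says that $R$ is left noetherian if and only if arbitrary direct sums of injectives are injective; this also shows that every absolutely pure $R$-module is $\Sigma$-injective in the sense of Example \ref{ex!}(1). For condition (1), I would apply the Matlis--Papp decomposition theorem: over a left noetherian ring, every injective module is a direct sum of indecomposable injectives, and by Remark \ref{ermk} indecomposable injectives are strongly indecomposable, which yields the required decomposition inside $R\text{-AbsP}$.

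Finally, for condition (2) and the bound $\si \leq \text{card}(R) + \aleph_0$, I would observe that any strongly indecomposable module in $R\text{-AbsP}$ is in particular a strongly indecomposable $\Sigma$-injective module, so the cardinality bound from Example \ref{ex!}(1) applies. The ``main obstacle'' is really nominal: the lemma is a bookkeeping exercise packaging three classical algebraic results (Megibben--Maddox, Bass--Papp, Matlis--Papp) into the framework of Hypothesis \ref{hyp1}, and essentially all technical content lives inside those theorems, which can be cited as black boxes.
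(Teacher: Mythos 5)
Your proof is correct and follows essentially the same route as the paper: over a left noetherian ring every absolutely pure module is injective and hence $\Sigma$-injective, after which the three conditions of Hypothesis \ref{hyp1} are exactly the content of Example \ref{ex!}(1) (the paper cites \cite[4.4.17]{prest09} for the reduction and then Example \ref{ex!} wholesale, where you name Megibben, Bass--Papp and Matlis--Papp for the individual conditions). The only point worth being explicit about, which you do handle, is that the strongly indecomposable summands in condition (1) must themselves lie in $R\text{-AbsP}$; this holds because they are injective.
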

 \begin{proof}
 Since $R$ is left noetherian, every absolutely pure module is $\Sigma$-injective by \cite[4.4.17]{prest09}. Then the assertion follows directly from Example \ref{ex!}.
 \end{proof}
 
In order to use Fact \ref{sebsc} we will need the following model-theoretic result.

\begin{fact}[{\cite[3.3, 3.5, 3.10]{maz2}}]\label{abspb}
$\K= (R\text{-AbsP}, \leq_p)$ is an abstract elementary class with $\LS(\K) = \ccr$ that is $\LS(\K)$-tame and has arbitrarily large models and the amalgamation property. 
\end{fact}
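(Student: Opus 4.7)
The plan is to verify the five assertions in turn: that $(R\text{-AbsP},\leq_p)$ is an AEC, that $\LS(\K)=\ccr$, tameness, arbitrarily large models, and amalgamation. For the AEC axioms, the crucial point is closure of $R\text{-AbsP}$ under unions of $\leq_p$-chains (and more generally directed colimits). I would use the characterization of absolute purity by $\mathrm{Ext}_R^1(N,-)=0$ for every finitely presented $N$: since $N$ is finitely presented, $\mathrm{Ext}_R^1(N,-)$ commutes with directed colimits of pure embeddings, so the colimit of a directed system of absolutely pure modules remains absolutely pure. The coherence axiom is immediate from the definition of purity, and the downward L\"{o}wenheim--Skolem property reduces to showing that every subset $A$ of cardinality at most $\ccr$ inside an absolutely pure $M$ sits inside some absolutely pure pure-submodule of $M$ of cardinality at most $\ccr$; this can be obtained by a standard iterated closure argument using that a pure submodule exists of any prescribed size above $\ccr$ containing $A$. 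This also pins down $\LS(\K)$ as $\ccr$, since the language of $R$-modules has cardinality $\ccr$.

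For tameness, the plan is to identify Galois-types over $M\in\K$ with pp-types of tuples over $M$. In AECs of modules with pure embeddings, two elements realize the same Galois-type over $M$ iff they satisfy the same positive primitive formulas with parameters from $M$. Since every pp-formula has finitely many parameters, distinct pp-types are witnessed on a finitely generated (hence $\LS(\K)$-sized) submodule of $M$, so the class is $\LS(\K)$-tame. Arbitrarily large models come for free: injective modules are absolutely pure, and every module embeds into an injective module of arbitrary size, so $\K$ has arbitrarily large models.

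The main obstacle will be amalgamation. Given pure embeddings $f_i\colon M\to N_i$ with $M,N_1,N_2\in R\text{-AbsP}$, I would first form the pushout $P$ of $f_1,f_2$ in $R$-Mod; the induced maps $N_i\to P$ are pure because purity is preserved under pushouts along module maps. However $P$ need not itself be absolutely pure, so the step that requires care is replacing $P$ by an absolutely pure module $N^*$ containing $P$ purely; for this one takes the injective envelope $E(P)$, which is absolutely pure, and notes that $P\leq_p E(P)$. The compositions $N_i\to P\leq_p E(P)$ are then pure embeddings into an absolutely pure module, completing the amalgamation square in $\K$. The rest is bookkeeping to check that all embeddings in sight are pure.
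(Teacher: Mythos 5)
This statement is not proved in the paper at all --- it is imported verbatim from \cite[3.3, 3.5, 3.10]{maz2} --- so there is nothing internal to compare against; I am judging your outline on its own. The overall architecture is right, and several pieces (coherence, downward L\"{o}wenheim--Skolem via the fact that a pure submodule of an absolutely pure module is again absolutely pure, arbitrarily large models via injectives, tameness via identifying Galois types with pp-types --- though that identification is itself the nontrivial content of the cited 3.10, not a generic fact about ``AECs of modules with pure embeddings'') are essentially how this goes in \cite{maz2}. However, two of your key justifications are false as stated. For closure under unions of $\leq_p$-chains you invoke that $\mathrm{Ext}^1_R(N,-)$ commutes with directed colimits when $N$ is finitely presented. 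Over an arbitrary ring this fails: it requires $N$ to have a projective resolution finitely generated through degree $2$ (automatic over left coherent rings, but the whole point of the Fact is that $R$ is arbitrary). The correct argument is elementary and avoids $\mathrm{Ext}$ entirely: if $M=\bigcup_i M_i$ with each $M_i$ absolutely pure and $M\leq N$, then a pp-formula over $M$ realized in $N$ has its finitely many parameters in some $M_i$; since $M_i\leq N$ and $M_i$ is absolutely pure, $M_i\leq_p N$, so the formula is already realized in $M_i\subseteq M$, and $M$ is absolutely pure.

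The more serious error is in the amalgamation step, where you assert that the pushout $P$ satisfies $P\leq_p E(P)$. A module is pure in its injective envelope \emph{exactly} when it is absolutely pure (consider $\mathbb{Z}\leq\mathbb{Q}$), so this claim is equivalent to the assertion you had just conceded may fail, namely that $P$ is absolutely pure. The amalgamation nevertheless works, but for a different reason: $N_1$ and $N_2$ are themselves absolutely pure, so \emph{every} monomorphism out of them into any module is pure. Hence it suffices that $N_i\to P$ and $P\to E(P)$ are monomorphisms (pushouts of monomorphisms along arbitrary maps are monomorphisms in module categories); the composites $N_i\to E(P)$ are then embeddings of absolutely pure modules into the absolutely pure module $E(P)$ and are automatically pure. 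Neither the purity of $N_i\to P$ nor any purity of $P\to E(P)$ is needed, and the appeal to ``purity is preserved under pushouts'' can be dropped. With these two repairs your outline becomes a correct proof.
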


\begin{theorem}\label{absp} Assume $R$ is an associative ring with unity. The following are equivalent.
\begin{enumerate}
\item $R\text{-AbsP}$ is $\lambda$-categorical in all $\lambda > \text{card}(R) + \aleph_0$.
\item $R\text{-AbsP}$ is $\lambda$-categorical in some $\lambda > \text{card}(R) + \aleph_0$.
\item $R$ is left noetherian and there is a unique indecomposable injective module up to isomorphisms.

\end{enumerate}
\end{theorem}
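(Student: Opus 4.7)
The plan is to prove the cycle $(3) \Rightarrow (1) \Rightarrow (2) \Rightarrow (3)$; only the non-trivial implications $(3) \Rightarrow (1)$ and $(2) \Rightarrow (3)$ need work. For $(3) \Rightarrow (1)$, assume $R$ is left noetherian with a unique indecomposable injective $U$. Lemma \ref{cor-ab} places $R$-AbsP into the framework of Hypothesis \ref{hyp1} with $\si \leq \text{card}(R) + \aleph_0$. Because $R$ is noetherian, every absolutely pure module is $\Sigma$-injective and in particular injective, so any strongly indecomposable absolutely pure module is an indecomposable injective; combining this with Remark \ref{ermk} makes $U$ the unique strongly indecomposable module of the class, and Lemma \ref{basicl} then yields $\lambda$-categoricity for every $\lambda > \text{card}(R) + \aleph_0$.

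The core of the argument is $(2) \Rightarrow (3)$, and the proof follows the pattern used for $(2) \Rightarrow (3)$ in Theorem \ref{lpi}. By Fact \ref{abspb}, $\K = (R\text{-AbsP}, \leq_p)$ is an AEC with $\LS(\K) = \text{card}(R) + \aleph_0$ that is $\LS(\K)$-tame, has arbitrarily large models, and has the amalgamation property, so Fact \ref{sebsc} upgrades the hypothesis to $\mu$-categoricity at every $\mu = \beth_\delta$ with $\delta = (2^{\LS(\K)})^+ \alpha$, $\alpha > 0$. For such $\delta$ with $\alpha=1$ one has $\cof(\mu) = (2^{\LS(\K)})^+ > \text{card}(R) + \aleph_0$ and hence $\mu^{\text{card}(R)+\aleph_0} = \mu$. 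To prove $R$ is left noetherian I verify that every non-zero injective $E$ is $\Sigma$-injective. Taking $\delta = (2^{\LS(\K)})^+ (\|E\|+1)$ ensures $\mu > \|E\|$; let $N$ be any absolutely pure module of size $\mu$ (one exists by categoricity). The key algebraic observation is that $E(N) = PE(N)$ whenever $N$ is absolutely pure: $N \leq_p E(N)$ holds by absolute purity and $E(N)$ is pure-injective (being injective), so minimality of $PE(N)$ makes it a pure submodule, hence a direct summand, of $E(N)$; essentiality of $N$ in $E(N)$ together with $N \subseteq PE(N)$ forces the complement to vanish. Ziegler's bound \cite[3.11]{ziegler} then yields $|E(N)| = |PE(N)| \leq \mu^{\text{card}(R) + \aleph_0} = \mu$. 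Since $E^{(\mu)}$ is absolutely pure of size $\mu$, $\mu$-categoricity gives $E^{(\mu)} \cong E(N)$, so $E^{(\mu)}$ is injective; as $E^{(\aleph_0)}$ is a direct summand of $E^{(\mu)}$ it is injective too, so $E$ is $\Sigma$-injective, and $R$ is left noetherian.

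Once $R$ is known to be noetherian, the uniqueness of the indecomposable injective is a direct consequence: Lemma \ref{cor-ab} places $R$-AbsP back into Hypothesis \ref{hyp1} with $\si \leq \text{card}(R) + \aleph_0$, so Lemma \ref{basicl} together with the assumed $\lambda$-categoricity for some $\lambda > \text{card}(R) + \aleph_0$ produces a unique strongly indecomposable absolutely pure module, which by the noetherian hypothesis and Remark \ref{ermk} is the unique indecomposable injective. The main obstacle is the noetherianness step: one needs to produce an injective of size exactly $\mu$ to match against $E^{(\mu)}$, and this requires both the algebraic identification $E(N) = PE(N)$ for absolutely pure $N$ and the sharp cardinality bound on pure-injective envelopes; the cofinality conditions on $\mu$ supplied by Fact \ref{sebsc} are precisely what make the cardinal arithmetic close up.
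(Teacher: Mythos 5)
Your proposal is correct and follows essentially the same route as the paper: Vasey's transfer (Fact \ref{sebsc}) to land categoricity on a $\beth_\delta$ of large cofinality, a cardinality bound on an envelope of the categoricity model so that a direct-sum power can be matched against it, and then Lemma \ref{cor-ab} together with Lemma \ref{basicl} and the Krull--Schmidt--Remak--Azumaya machinery for the uniqueness of the indecomposable injective and for $(3)\Rightarrow(1)$. The only deviations are cosmetic: the paper certifies noetherianness by showing every absolutely pure module is injective and bounds $\|E(N)\|$ directly via Eklof's theorem, whereas you show every injective is $\Sigma$-injective (the criterion the paper itself uses in Theorem \ref{linj}) and reach the same bound through the identification $E(N)=PE(N)$ and Ziegler's estimate --- both are valid.
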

\begin{proof}
We only need to show (2) implies (3) and (3) implies (1).

$(2) \Rightarrow (3)$: Let $\K=(R\text{-AbsP}, \leq_p)$.  Since $\K= (R\text{-AbsP}, \leq_p)$ is an  abstract elementary class that is $\LS(\K)$-tame and has arbitrarily large models and the amalgamation property by Fact \ref{abspb},  $R$-AbsP is  $\beth_\delta$-categorical in every $\delta$ divisible by $(2^{\ccr})^+$ by Fact \ref{sebsc}.

We show that $R$ is left noetherian by showing that every absolutely pure left $R$-modules is injective, this is enough by \cite[4.4.17]{prest09}. Let $M$ be a non-zero absolutely pure left $R$-modules and $\mu =\beth_\delta$ where $\delta = (2^{\ccr})^+  (\| M \| + 1)$ (taking ordinal product).  Let $N$ be an absolutely pure module of size $\mu$, then $\| E(N) \| \leq \beth_\delta^{\text{card}(R) + \aleph_0}=\beth_\delta=\mu$. The inequality follows from \cite[Theo 1]{eklof} as we are taking the injective envelope and the equality from the fact that $cf(\beth_\delta) = (2^{\ccr})^+$. Hence $E(N)$  has cardinality $\mu$. Moreover, $M^{(\mu)}$ is absolutely pure because absolutely pure modules are closed under direct sums \cite[2.3.5]{prest09}. As  $\|  M \| <\mu$, $M^{(\mu)} \cong E(N)$ by $\mu$-categoricity. Hence $M^{(\mu)}$ is injective. Therefore, $M$ is injective as injective modules are closed under direct summands. Therefore, $R$ is left noetherian.

The existence of a unique indecomposable module follows directly from Lemma \ref{cor-ab}, Lemma \ref{basicl} and Remark \ref{ermk}.

$(3) \Rightarrow (1)$: The result follows directly from Lemma \ref{cor-ab} and Lemma \ref{basicl} using that indecomposable injective modules are strongly indecomposable. \end{proof}

\begin{remark}
The equivalence between (1) and (3) can be obtained without using Fact \ref{sebsc}. The proof of (1) implies (3) can be done as (2) implies (3) but there is no need to use  Fact \ref{sebsc} as by assumption the class is $\lambda$-categorical in all $\lambda > \text{card}(R) + \aleph_0$.
\end{remark}

\begin{remark}
The equivalence between (1) and (2) of Theorem \ref{absp} is an instance of Shelah's Categoricity Conjecture. Since the class of absolutely pure left $R$-modules is first-order axiomatizable if and only if $R$ is left coherent \cite[3.4.24]{prest09}, the result does not follow from Morley's Categoricity Theorem or Shelah's generalization. Moreover, the result does not follow directly from any of the positive results known for AECs.
\end{remark}

\begin{remark}
 It is worth mentioning that (2) implies that the class of absolutely pure left $R$-modules is first-order axiomatizable, but this is a by-product of (2) implies (3). This is a weaker statement than that of  \cite[3.25]{maz2}.
\end{remark}

We are able to obtain a significantly nicer algebraic characterization if the ring is commutative. In order to do that we will need a few more algebraic results.

\begin{fact}[{\cite[3.62]{lam2}}]\label{bij}
Assume $R$ is a commutative noetherian ring with unity. There is a bijection between the set of isomorphism classes of injective indecomposable $R$-modules and the prime ideals of $R$.
\end{fact}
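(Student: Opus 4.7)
The plan is to construct the bijection explicitly by sending a prime ideal $\mathfrak{p} \subseteq R$ to the isomorphism class of the injective envelope $E(R/\mathfrak{p})$, i.e.\ to define a map $\Phi : \operatorname{Spec}(R) \to \{\text{iso.\ classes of indecomposable injective } R\text{-modules}\}$ by $\mathfrak{p} \mapsto [E(R/\mathfrak{p})]$. I would then separately verify that $\Phi$ is well-defined (each $E(R/\mathfrak{p})$ is indecomposable), injective, and surjective. The central tool throughout is the theory of associated primes, which is available precisely because $R$ is commutative noetherian.

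For well-definedness, I would argue by contradiction. Suppose $E(R/\mathfrak{p}) = A \oplus B$ with both summands non-zero. Since $R/\mathfrak{p}$ sits essentially inside its injective envelope, both $A \cap (R/\mathfrak{p})$ and $B \cap (R/\mathfrak{p})$ are non-zero submodules of $R/\mathfrak{p}$, i.e.\ non-zero ideals in the integral domain $R/\mathfrak{p}$. In a domain, any two non-zero ideals have non-zero intersection, contradicting $A \cap B = 0$. Hence $E(R/\mathfrak{p})$ is indecomposable.

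For surjectivity, let $E$ be a non-zero indecomposable injective $R$-module. Because $R$ is noetherian and $E \neq 0$, the set $\operatorname{Ass}(E)$ is non-empty; pick any $\mathfrak{p} \in \operatorname{Ass}(E)$, which by definition yields an embedding $R/\mathfrak{p} \hookrightarrow E$. By injectivity of $E$ together with the universal property of the injective envelope, this extends to an embedding $E(R/\mathfrak{p}) \hookrightarrow E$. Since $E(R/\mathfrak{p})$ is itself injective, its image is a direct summand of $E$, and indecomposability of $E$ forces the image to be all of $E$; thus $E \cong E(R/\mathfrak{p})$.

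For injectivity of $\Phi$, the plan is to show $\operatorname{Ass}(E(R/\mathfrak{p})) = \{\mathfrak{p}\}$. Over a noetherian ring the only associated prime of the domain $R/\mathfrak{p}$ is $\mathfrak{p}$ itself, and associated primes are preserved under essential extensions (if $R/\mathfrak{q} \hookrightarrow E(R/\mathfrak{p})$, essentiality gives a non-zero element of $R/\mathfrak{q}$ landing in $R/\mathfrak{p}$, whose annihilator is still $\mathfrak{q}$ by primality of $\mathfrak{q}$). Consequently $E(R/\mathfrak{p}) \cong E(R/\mathfrak{q})$ implies $\{\mathfrak{p}\} = \{\mathfrak{q}\}$, so $\mathfrak{p} = \mathfrak{q}$. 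The main obstacle—really the heaviest input—is assembling the standard associated prime machinery in the noetherian setting: non-emptiness of $\operatorname{Ass}$ for non-zero modules, its stability under essential extensions, and the computation $\operatorname{Ass}(R/\mathfrak{p}) = \{\mathfrak{p}\}$. Once these are granted, the three verifications above are essentially formal.
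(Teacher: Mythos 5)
Your proposal is correct, and it is essentially the classical Matlis argument: the paper itself gives no proof of this Fact (it is quoted from Lam's \emph{Lectures on Modules and Rings}, Theorem 3.62), and the proof there is exactly the bijection $\mathfrak{p} \mapsto E(R/\mathfrak{p})$ you describe, with indecomposability coming from uniformity of the domain $R/\mathfrak{p}$, surjectivity from the non-emptiness of $\operatorname{Ass}$ over a noetherian ring plus splitting off the injective summand $E(R/\mathfrak{p})$, and injectivity from $\operatorname{Ass}\bigl(E(R/\mathfrak{p})\bigr)=\{\mathfrak{p}\}$. All three verifications are carried out correctly and the noetherian hypothesis is invoked exactly where it is needed.
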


Recall that the Krull dimension of $R$ is the supremum of the lengths of all chains of prime ideals
in $R$, we denote it by $dim(R)$.

\begin{fact}[{\cite[8.5]{atmc}}]\label{art=noe} Assume $R$ is a commutative ring with unity.
$R$ is artinian if and only if $R$ is noetherian and $dim(R)=0$. 
\end{fact}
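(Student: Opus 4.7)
The plan is to prove both directions using two main tools: the descending chain condition to control primes and the Jacobson radical, and a filtration of $R$ by products of maximal ideals to establish that the ACC and DCC on $R$ coincide once every prime is maximal.

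For the forward direction, assuming $R$ is artinian, I would first argue that every prime is maximal, giving $dim(R)=0$. For $\mathfrak{p}$ prime, $R/\mathfrak{p}$ is an artinian integral domain; for any nonzero $a$, the chain $(a)\supseteq (a^2)\supseteq\cdots$ stabilizes, yielding $a^n=a^{n+1}b$ and hence $ab=1$ by cancellation. Next, by picking a DCC-minimal element of the family of finite intersections of maximal ideals, say $\mathfrak{m}_1\cap\cdots\cap\mathfrak{m}_k$, any other maximal ideal contains this intersection and so contains some $\mathfrak{m}_i$, forcing equality; hence $R$ has only finitely many maximal ideals. Third, I would show the Jacobson radical $J=\bigcap_i\mathfrak{m}_i$ is nilpotent via a Nakayama-style DCC argument: the chain $J\supseteq J^2\supseteq\cdots$ stabilizes at some $J^n$, and if $J^n\neq 0$ a DCC-minimal ideal $I$ with $IJ^n\neq 0$ is principal $(x)$ with $x\in xJ^n$, so $x=xy$ for some $y\in J$, forcing $x=0$ since $1-y$ is a unit. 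Finally, with $(\mathfrak{m}_1\cdots\mathfrak{m}_k)^n=0$, the filtration $R\supseteq\mathfrak{m}_1\supseteq\mathfrak{m}_1\mathfrak{m}_2\supseteq\cdots\supseteq 0$ has each successive quotient annihilated by one of the $\mathfrak{m}_i$, making it a vector space over a residue field; DCC on $R$ restricts to each such quotient, forcing finite dimension, which is equivalent to the noetherian condition there, and patching together a composition series shows $R$ is noetherian.

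For the reverse direction, assuming $R$ is noetherian with $dim(R)=0$, I would use that a noetherian ring has only finitely many minimal primes (via the theory of associated primes, or equivalently primary decomposition of $(0)$), and under $dim(R)=0$ these coincide with the maximal ideals $\mathfrak{m}_1,\ldots,\mathfrak{m}_k$. The nilradical $\bigcap_i\mathfrak{m}_i$ is finitely generated by noetherianness and each generator is nilpotent (the nilradical is the intersection of all primes), hence the nilradical itself is nilpotent. The same filtration $R\supseteq\mathfrak{m}_1\supseteq\mathfrak{m}_1\mathfrak{m}_2\supseteq\cdots\supseteq(\mathfrak{m}_1\cdots\mathfrak{m}_k)^n=0$ then runs in the opposite direction: each successive quotient is a finitely generated vector space over a residue field (finitely generated because $R$ is noetherian), hence of finite dimension, hence artinian, and assembling the short exact sequences propagates artinian-ness to $R$.

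The main obstacle will be the Nakayama-style DCC argument for the nilpotence of the Jacobson radical in the artinian case; this is the genuine technical heart of the equivalence and the only step that is not a direct bookkeeping exercise. Once that step and the finiteness of the maximal spectrum are in hand, the filtration argument simply exchanges DCC and ACC at the level of vector spaces in both directions, completing the proof.
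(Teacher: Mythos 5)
The paper does not prove this statement; it is quoted as a known result with a citation to Atiyah--MacDonald \cite[8.5]{atmc}, and your argument is correct and is essentially the standard proof given there (primes are maximal in an artinian domain quotient, finitely many maximal ideals by DCC, nilpotence of the radical via the minimal-ideal trick, and the filtration $R\supseteq\mathfrak{m}_1\supseteq\mathfrak{m}_1\mathfrak{m}_2\supseteq\cdots\supseteq 0$ whose quotients are vector spaces, on which ACC and DCC coincide). The only step you gloss is that in the Nakayama-style argument one must first pick $x\in I$ with $xJ^n\neq 0$ to get $I=(x)$ by minimality, and then use $J^n=J^{2n}$ to see that $xJ^n$ itself lies in the family and hence equals $(x)$; this is routine and does not affect correctness.
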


Assuming $R$ is commutative we improve Theorem \ref{absp}.

\begin{theorem}\label{absp-c} Assume $R$ is a commutative ring with unity. The following are equivalent.
\begin{enumerate}
\item $R\text{-AbsP}$ is $\lambda$-categorical in all $\lambda > \text{card}(R) + \aleph_0$
\item $R\text{-AbsP}$ is $\lambda$-categorical in some $\lambda > \text{card}(R) + \aleph_0$
\item $R$ is a local artinian ring.
\end{enumerate}
\end{theorem}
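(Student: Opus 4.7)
The plan is to reduce everything to Theorem \ref{absp} and then translate the condition ``$R$ is left noetherian and there is a unique indecomposable injective module up to isomorphism'' into ``$R$ is local artinian'' using the commutativity assumption together with Facts \ref{bij} and \ref{art=noe}.

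First I would observe that $(1) \Rightarrow (2)$ is trivial, so the work is in $(2) \Rightarrow (3)$ and $(3) \Rightarrow (1)$. For $(2) \Rightarrow (3)$, applying Theorem \ref{absp} directly gives that $R$ is (commutative) noetherian and has a unique indecomposable injective module up to isomorphism. By Fact \ref{bij}, the isomorphism classes of indecomposable injective $R$-modules are in bijection with the prime ideals of $R$; hence uniqueness of the former forces $R$ to have exactly one prime ideal. A commutative ring with a unique prime ideal is automatically local (its unique prime is its unique maximal ideal) and has Krull dimension $0$. Combining this with noetherianity and Fact \ref{art=noe}, we conclude that $R$ is local artinian.

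For $(3) \Rightarrow (1)$, I would reverse the argument. If $R$ is local artinian, then by Fact \ref{art=noe} it is noetherian with $\dim(R)=0$, so every prime ideal is maximal; since $R$ is local, this forces a unique prime ideal. Fact \ref{bij} then gives a unique indecomposable injective $R$-module up to isomorphism. Thus $R$ satisfies condition (3) of Theorem \ref{absp}, and that theorem yields $\lambda$-categoricity of $R\text{-AbsP}$ in all $\lambda > \text{card}(R)+\aleph_0$.

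There is no real obstacle here beyond correctly invoking the previous theorem; the content is entirely in the commutative algebra dictionary ``unique indecomposable injective $\leftrightarrow$ unique prime $\leftrightarrow$ local of Krull dimension zero,'' together with the Akizuki-style equivalence of Fact \ref{art=noe}. The only small subtlety worth flagging is that in the $(2)\Rightarrow(3)$ direction one must be sure that ``commutative'' is preserved when quoting Theorem \ref{absp}, which it trivially is, and that Fact \ref{bij} requires commutative noetherian, which is precisely what the first half of Theorem \ref{absp}(3) supplies.
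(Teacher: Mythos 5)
Your proposal is correct and follows essentially the same route as the paper's proof: reduce to Theorem \ref{absp} and then translate ``noetherian with a unique indecomposable injective'' into ``local artinian'' via Fact \ref{bij} (unique prime ideal) and Fact \ref{art=noe} (noetherian of Krull dimension zero equals artinian), in both directions. No gaps.
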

\begin{proof} We only need to show (2) implies (3) and (3) implies (1).  

$(2) \Rightarrow (3)$: It follows from Theorem \ref{absp} that $R$ is noetherian and that there is a unique indecomposable injective $R$-module. Then it follows from Fact \ref{bij} that $R$ has a unique prime ideal. Since every maximal ideal is prime, $R$ is a local ring (see e.g. \cite[1.10]{fac}). Since there is a unique prime ideal clearly $dim(R)=0$, then $R$ is  artinian by Fact \ref{art=noe}. Hence $R$ is a local artinian ring.

$(3) \Rightarrow (1)$: Since $R$ is artinian, $R$ is noetherian. Moreover, since $R$ is commutative artinian every prime ideal is maximal \cite[8.1]{atmc}. Hence there is a unique prime ideal because $R$ is local.  Since $R$ is a commutative noetherian ring, it follows that there is a unique indecomposable injective $R$-module by Fact \ref{bij}. Therefore, $R\text{-AbsP}$ is $\lambda$-categorical in all $\lambda > \text{card}(R) + \aleph_0$ by Theorem \ref{absp}. \end{proof}

\begin{example}
Let $R$ be  commutative local artinian ring that is not a field. By Corollary \ref{mod-c} and Theorem \ref{absp-c}, $R\text{-Mod}$ and $R\text{-AbsP}$ are different. Moreover, $R\text{-AbsP}$ is first-order axiomatizable since $R$ is coherent \cite[3.4.24]{prest09}. Therefore, commutative local artinian rings that are not fields provide a new natural family of examples of classes axiomatizable in first-order which are categorical in a tail of cardinals. This is a natural family of rings as every commutative artinian ring is a  finite product of commutative local artinian rings \cite[23.12]{lam1}. An explicit example of such a ring is $\mathbb{R}[x]/(x^2)$. 
\end{example}

We finish this subsection by showing that the results we obtained for absolutely pure modules can be extended to locally injective modules (also called finitely injective modules). A left $R$-module $M$ is \emph{locally injective} if for every finite subset of $M$ there is an injective submodule of $M$ containing it. We denote the class of locally injective $R$-modules by $R\text{-l-inj}$.

\begin{theorem}\label{linj} Assume $R$ is an associative ring with unity. The following are equivalent.
\begin{enumerate}
\item $R\text{-l-inj}$ is $\lambda$-categorical in all $\lambda > \text{card}(R) + \aleph_0$.
\item $R\text{-l-inj}$ is $\lambda$-categorical in some $\lambda > 2^{\text{card}(R) + \aleph_0}$.
\item $R$ is left noetherian and there is a unique indecomposable injective module up to isomorphisms.

\end{enumerate}
\end{theorem}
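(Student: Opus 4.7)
The plan is to follow the blueprint of Theorems \ref{lpi} and \ref{absp}, exploiting the fact that over a left noetherian ring the class of locally injective modules collapses to the class of injective modules, which in turn equals the class of absolutely pure modules by \cite[4.4.17]{prest09}.

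For $(3)\Rightarrow(1)$, I would first verify $R\text{-l-inj}\subseteq R\text{-AbsP}$ over any ring: given $M$ locally injective with $M\le N$ and a finite tuple $\bar b \in M$, an injective submodule $I_{\bar b}\le M$ containing $\bar b$ is a direct summand of $N$, so any positive primitive formula witnessed over $\bar b$ in $N$ retracts to one witnessed in $I_{\bar b}\le M$. Hence $M$ is pure in $N$, and $M$ is absolutely pure. Combined with $R\text{-AbsP} = $ (the class of injective $R$-modules) under left noetherianness, this yields $R\text{-l-inj} = R\text{-AbsP}$ under hypothesis (3), and Theorem \ref{absp} finishes the argument.

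For $(2)\Rightarrow(3)$, I would first need that $\K=(R\text{-l-inj},\leq_p)$ is an AEC with $\ccr\leq \LS(\K)\leq 2^{\ccr}$ that is $\LS(\K)$-tame and has arbitrarily large models and the amalgamation property; this should be obtained from \cite{maz2} in direct parallel with Facts \ref{lpi-easy} and \ref{abspb}. Fact \ref{sebsc} then yields $\beth_\delta$-categoricity for every $\delta$ divisible by $(2^{\LS(\K)})^+$. To prove $R$ is left noetherian, I would show every injective module $E$ is $\Sigma$-injective. Set $\mu=\beth_\delta$ with $\delta = (2^{\LS(\K)})^+ (\|E\|+1)$ (ordinal product). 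Then $E^{(\mu)}$ is a direct sum of injectives, hence locally injective, and has cardinality $\mu$. For any locally injective $N$ of cardinality $\mu$, its injective envelope $E(N)$ is injective, hence locally injective, and has cardinality at most $\mu^{\ccr}=\mu$ by \cite[Theo 1]{eklof}, using $\cof(\beth_\delta)=(2^{\LS(\K)})^+ > \ccr$. By $\mu$-categoricity, $E^{(\mu)}\cong E(N)$ is injective, so $E$ is $\Sigma$-injective. Thus $R$ is left noetherian (since every injective being $\Sigma$-injective is a standard noetherianness characterization). Uniqueness of the indecomposable injective then follows from Lemma \ref{cor-ab}, Lemma \ref{basicl} and Remark \ref{ermk}, exactly as in the proof of Theorem \ref{absp}.

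The main obstacle will be documenting the AEC hypotheses for $R\text{-l-inj}$ (though the methods of \cite{maz2} for the parallel classes should transfer essentially verbatim) and executing the cofinality bookkeeping in the Vasey-style transfer so that $\|E(N)\|$ does not escape $\mu$.
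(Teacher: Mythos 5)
Your proposal follows essentially the same route as the paper: reduce to the absolutely pure case by observing that locally injective modules are absolutely pure and that the two classes coincide over left noetherian rings, invoke the AEC structure results from \cite{maz2} together with Fact \ref{sebsc} to get categoricity in a club of $\beth_\delta$'s, and then run the injective-envelope cardinality argument to show every injective module is $\Sigma$-injective, hence $R$ is left noetherian. The only differences are that you spell out details the paper leaves implicit (the retraction argument for $R\text{-l-inj}\subseteq R\text{-AbsP}$, and passing from $E^{(\mu)}$ injective to $\Sigma$-injectivity), so the argument is correct and matches the paper's.
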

\begin{proof}
We only need to show (2) implies (3) and (3) implies (1). 

$(2) \Rightarrow (3)$: We show that $R$ is left noetherian by showing that every injective left $R$-modules is $\Sigma$-injective, this is enough by \cite[4.4.17]{prest09}. 

Let $\K=(R\text{-l-inj}, \leq_p)$.   $\K$ is an  abstract elementary class with $\text{card}(R) + \aleph_0 \leq \LS(\K) \leq 2^{\ccr}$ that is $\LS(\K)$-tame and has arbitrarily large models and the amalgamation property by \cite[3.3, 3.5, 3.10]{maz2},  then $R$-l-inj is  $\beth_\delta$-categorical in every $\delta$ divisible by $(2^{\LS(\K)})^+$ by Fact \ref{sebsc}. Then doing a similar argument to that of $(2)$ implies $(3)$ of Theorem \ref{absp}, by using that locally injective modules are absolutely pure, it follows that every  injective module is $\Sigma$-injective. Hence $R$ is left noetherian.  For left notherian rings, the class of locally injective modules is the same as the class of absolutely pure modules. Then the result follows from Theorem \ref{absp-c}.

$(3) \Rightarrow (1)$: Since $R$ is left noeherian, the class of locally injective $R$-modules is the same as the class of absolutely pure $R$-modules. Then the result follows from Theorem \ref{absp-c}. \end{proof}

\begin{remark}
If $R$ is not left coherent, then $R\text{-l-inj}$ is not first-order axiomatizable. The assertion can be proved using the same argument as that of \cite[3.4.24]{prest09} and noticing that locally injective modules are absolutely pure. Therefore, the equivalence between (1) and (2) of Theorem \ref{linj} is an instance of Shelah's Categoricity Conjecture which does not follow from Morley's Categoricity Theorem or Shelah's generalization. Moreover,  the result does not follow directly from any of the positive results known for AECs.
\end{remark}

\begin{remark}
A natural question to ask is if the categoricity threshold can be lowered to $\ccr$ in the above theorem. 
\end{remark}

\begin{remark}  Let $R$ be a commutative ring with unity. It is clear that: $R\text{-l-inj}$ is $\lambda$-categorical in all $\lambda > \text{card}(R) + \aleph_0$ if and only if $R$ is a local artinian ring.
\end{remark}

\subsection{Flat modules}  In this subsection we study the class of flat left $R$-modules.

\begin{defin}
A left $R$-module $F$ is \emph{flat} if $(-) \otimes F$ is an exact functor. We denote by $R$-Flat the class of flat left $R$-modules.
\end{defin}

 A ring $R$ is \emph{left perfect} if every countably generated flat left $R$-module is projective.The standard definition of left perfect ring is that every flat left $R$-module is projective, but it is enough to consider only countably generated flat modules by \cite[24.24]{lam1} and the proof of \cite[24.25]{lam1}. One can show that $R$ is left perfect if and only if $R/J(R)$ is semisimple and $J(R)$ is left $T$-nilpotent \cite[24.25]{lam1}. Here $J(R)$ denotes the \emph{Jacobson radical}, i.e., the intersections of all maximal left ideals. An ideal $I$ is \emph{left $T$-nilpotent} if for every $\{a_n \}_{n < \omega} \subseteq I$, there is $n \geq 1$ such that $a_0 \cdots a_{n-1}=0$.
 
 The following result about perfect rings will be useful.

\begin{fact}\label{fmany}(\cite[Theorem 1]{saek}) Let $R$ be a left perfect ring. There is a bijection between the strongly indecomposable projective left  $R$-modules  and the strongly indecomposable left $R/J(R)$-modules. \end{fact}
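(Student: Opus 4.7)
The plan is to construct the bijection explicitly using two mutually inverse constructions: mod out by the Jacobson radical in one direction, take the projective cover in the other. The essential tool on the analytic side is Bass's characterization of left perfect rings: $R$ is left perfect if and only if every left $R$-module has a projective cover; combined with the fact that $R/J(R)$ is semisimple and $J(R)$ is left $T$-nilpotent (hence $J(R)P$ is superfluous in every projective $P$), this provides all the structural machinery needed.

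First I would define $\Phi: P \mapsto P/J(R)P$ on strongly indecomposable projective left $R$-modules. Since $J(R)P$ is superfluous, the natural map $P \to P/J(R)P$ is a projective cover, and $P/J(R)P$ is an $R/J(R)$-module. If $P/J(R)P$ decomposed as $M_1 \oplus M_2$ in $R/J(R)$-Mod, the corresponding orthogonal idempotents in $\textrm{End}_{R/J(R)}(P/J(R)P)$ would lift through the projective cover to orthogonal idempotents in $\textrm{End}_R(P)$, contradicting that $\textrm{End}_R(P)$ is local. Hence $P/J(R)P$ is indecomposable; since $R/J(R)$ is semisimple, it must be simple, and simple modules over semisimple rings are strongly indecomposable (their endomorphism rings are division rings by Schur's lemma).

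Next I would define $\Psi: S \mapsto P(S)$, where a strongly indecomposable $R/J(R)$-module $S$ (equivalently, a simple $R/J(R)$-module, equivalently a simple $R$-module annihilated by $J(R)$) is viewed as an $R$-module and sent to its projective cover. To check that $P(S)$ is strongly indecomposable, I would use the standard fact that the projective cover map induces a ring isomorphism $\textrm{End}_R(P(S))/\textrm{rad}(\textrm{End}_R(P(S))) \cong \textrm{End}_{R/J(R)}(S)$; the right-hand side is a division ring by Schur, and since $J(R)$ being $T$-nilpotent gives us enough lifting of idempotents, $\textrm{End}_R(P(S))$ is local.

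Finally I would check both compositions are the identity. For $\Phi \circ \Psi$: since $J(R)$ acts trivially on $S$, the kernel of $P(S) \twoheadrightarrow S$ contains $J(R)P(S)$, and the reverse inclusion follows from superfluousness together with $S$ being semisimple over $R/J(R)$, yielding $P(S)/J(R)P(S) \cong S$. For $\Psi \circ \Phi$: the uniqueness (up to isomorphism) of projective covers identifies $P(P/J(R)P)$ with $P$. The main obstacle I anticipate is the bookkeeping in Step 2 around lifting endomorphisms and idempotents modulo $J(R)$, i.e., verifying that $\textrm{End}_R(P(S))$ is genuinely local rather than merely indecomposable; this is where left $T$-nilpotence of $J(R)$ does its real work, since over a general ring one only gets idempotent lifting modulo a nil ideal.
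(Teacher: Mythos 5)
The paper does not prove this Fact; it is quoted from Sabbagh--Eklof, so there is no internal argument to compare against. Your proposal is a correct, self-contained proof of the cited result, and it is the standard one: the correspondence $P \mapsto P/J(R)P$ and $S \mapsto P(S)$ via projective covers, using Bass's theorem that left perfect rings admit projective covers and that left $T$-nilpotence of $J(R)$ makes $J(R)M$ superfluous in every module $M$. All the key verifications are in place: the surjection $\mathrm{End}_R(P)\to\mathrm{End}_{R/J(R)}(P/J(R)P)$ has kernel contained in $\mathrm{rad}(\mathrm{End}_R(P))$ because the kernel of a projective cover is superfluous, which gives $\mathrm{End}_R(P(S))/\mathrm{rad}\cong\mathrm{End}_{R/J(R)}(S)$, a division ring by Schur, hence $\mathrm{End}_R(P(S))$ is local; and the two compositions are the identity by uniqueness of projective covers and by $\ker(P(S)\twoheadrightarrow S)=J(R)P(S)$.

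Two small refinements. First, in your Step 1 you do not need to lift idempotents at all (and lifting idempotents modulo an ideal contained in the radical is not automatic in general): since the kernel of $\mathrm{End}_R(P)\to\mathrm{End}_{R/J(R)}(P/J(R)P)$ lies in the radical and $\mathrm{End}_R(P)$ is local, the target is a quotient of a local ring, hence local, hence has no nontrivial idempotents; so $P/J(R)P$ is indecomposable (and nonzero, since $J(R)P$ superfluous forces $J(R)P\neq P$). Second, your worry at the end is unfounded for the same reason: a ring is local precisely when its quotient by its Jacobson radical is a division ring, so once you have $\mathrm{End}_R(P(S))/\mathrm{rad}\cong\mathrm{End}_{R/J(R)}(S)$ you are done --- no idempotent lifting is needed anywhere in the argument.
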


%\begin{fact}\label{fmany}(\cite[Theorem 1]{saek})Let $R$ be a left perfect ring.
%\begin{enumerate}
%\item A projective module is  indecomposable if and only if it is strongly indecomposable.
%\item There is a bijection between the strongly indecomposable projective left  $R$-modules  and the simple left $R/J(R)$-modules.
%\item Every flat left $R$-module is a direct sum of strongly indecomposable projective left $R$-modules.
%\end{enumerate}
%\end{fact}
We show that if a ring is left perfect, then the class of flat modules satisfies Hypothesis \ref{hyp1}.

 \begin{lemma}\label{cor-flat}
 If $R$ is left perfect, then the class of flat $R$-modules satisfies Hypothesis \ref{hyp1} with $\si \leq \text{card}(R) +\aleph_0$.
 \end{lemma}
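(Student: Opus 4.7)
My plan is to reduce the verification to standard structure theory of perfect rings. Since $R$ is left perfect, every flat left $R$-module is projective (by the definition of left perfect given above, together with the equivalent characterization that replaces ``countably generated'' by ``arbitrary''). Consequently, the class of flat left $R$-modules coincides with the class of projective left $R$-modules, and Condition~(3) of Hypothesis~\ref{hyp1} is then immediate since arbitrary direct sums of projectives are projective.

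For Condition~(1), I would invoke the classical decomposition theorem for projectives over a semiperfect (a fortiori perfect) ring: every projective left $R$-module is a direct sum of principal indecomposables of the form $Re$, where $e$ ranges over primitive idempotents of $R$. Each such $Re$ has the local endomorphism ring $eRe$, hence is strongly indecomposable, which gives the desired decomposition.

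For Condition~(2), I would combine Fact~\ref{fmany} with Fact~\ref{wa}. Since $R/J(R)$ is semisimple, Wedderburn--Artin (Fact~\ref{wa}) yields only finitely many strongly indecomposable $R/J(R)$-modules up to isomorphism. By Fact~\ref{fmany}, these correspond bijectively to the strongly indecomposable projective $R$-modules, and the plan is to identify each such projective with some $Re$ for a primitive idempotent $e \in R$, whence it has cardinality at most $\text{card}(R) + \aleph_0$. This yields $\si \leq \text{card}(R) + \aleph_0$. The main obstacle I anticipate is precisely this last identification: Fact~\ref{fmany} supplies only the bijection, not a description of the modules, so one must separately verify that the strongly indecomposable projective $R$-modules are cyclic. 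This rests on lifting primitive idempotents from $R/J(R)$ to $R$, which is available because left perfect rings are semiperfect.
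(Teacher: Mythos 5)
Your proof is correct and follows essentially the same route as the paper, which likewise reduces flat to projective modules over a left perfect ring and cites Sabbagh--Eklof for the decomposition of projectives into principal indecomposables with local endomorphism rings and for the fact that every strongly indecomposable projective is a direct summand of $R$ (hence of cardinality at most $\text{card}(R)+\aleph_0$). The only remark is that the obstacle you anticipate for Condition (2) dissolves: once Condition (1) is established, any strongly indecomposable projective module is itself a direct sum of modules $Re$ and, being indecomposable, must be isomorphic to a single $Re$, so the detour through Fact \ref{fmany} and Fact \ref{wa} is unnecessary.
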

 \begin{proof} Since $R$ is left perfect, the class of flat modules is the same as the class of projective modules. Condition 2.4.(1) holds by \cite[Theo 1]{saek} and the fact that projective indecomposable modules are strongly indecomposable in perfect rings by \cite[Prop 2]{saek}. $\si \leq \ccr$ since every strongly indecomposable projective module is a direct summand of $R$ by \cite[Cor 1]{saek}. It is clear that Condition 2.4.(3) holds as the class of flat modules is closed under direct sums.
 \end{proof}
 
In this case we can transfer categoricity algebraically. 

\begin{theorem}\label{fmain} Assume $R$ is an associative ring with unity. The following are equivalent.
\begin{enumerate}
\item $R\text{-Flat}$ is $\lambda$-categorical in all $\lambda > \text{card}(R) + \aleph_0$
\item $R\text{-Flat}$ is $\lambda$-categorical in some $\lambda  > \text{card}(R) + \aleph_0$. 
\item $R$ is left perfect and $R/J(R) \cong M_n(D)$ for $D$ a division ring and $n \geq 1$. 
\item $R \cong M_n(k)$ for $k$ a local ring such that its maximal ideal is left $T$-nilpotent and $n \geq 1$. 
\end{enumerate}
\end{theorem}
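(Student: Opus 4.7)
The plan is to run the cycle $(1) \Rightarrow (2) \Rightarrow (3) \Rightarrow (4) \Rightarrow (1)$, with $(1) \Rightarrow (2)$ immediate.

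For $(2) \Rightarrow (3)$, I would first upgrade $\lambda$-categoricity into left perfectness of $R$ via a direct cardinality argument. Let $M$ be a non-zero countably generated flat left $R$-module. Since $\|M\| \leq \ccr < \lambda$, both $M^{(\lambda)}$ and the free module $R^{(\lambda)}$ are flat of cardinality exactly $\lambda$, so $\lambda$-categoricity gives $M^{(\lambda)} \cong R^{(\lambda)}$. Thus $M^{(\lambda)}$ is free and $M$ is projective as a direct summand of a free module; since this holds for every countably generated flat $M$, the ring $R$ is left perfect. Lemma \ref{cor-flat} now provides Hypothesis \ref{hyp1} for $R\text{-Flat}$ with $\si \leq \ccr$, and Lemma \ref{basicl} produces a unique (up to isomorphism) strongly indecomposable flat $R$-module. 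Because flat equals projective for perfect rings, Fact \ref{fmany} transports this into uniqueness of the strongly indecomposable $R/J(R)$-module, and Fact \ref{wa} applied to the semisimple ring $R/J(R)$ then forces its product decomposition to have a single factor, i.e., $R/J(R) \cong M_n(D)$ for some division ring $D$ and $n \geq 1$.

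For $(3) \Rightarrow (4)$, I would invoke the structure of semiperfect rings. A left perfect ring is semiperfect, so $1$ decomposes into orthogonal primitive idempotents $e_1, \ldots, e_m$ with each corner $e_i R e_i$ local. Since $R/J(R) \cong M_n(D)$ has a unique simple module, all the $Re_i$ are isomorphic as left $R$-modules, and matching multiplicities in $R/J(R)$ forces $m = n$. Setting $k := e_1 R e_1$, the standard endomorphism-ring computation on the progenerator $Re_1$ produces $R \cong M_n(k)$ with $k$ local. Perfection is Morita invariant, so $k$ inherits left perfectness; and since $k$ is local with $k/J(k) \cong D$ a division ring (hence semisimple), left perfectness of $k$ is equivalent to left $T$-nilpotence of its unique maximal ideal $J(k)$.

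For $(4) \Rightarrow (1)$, assume $R \cong M_n(k)$ with $k$ local and $J(k)$ left $T$-nilpotent. Then $k/J(k)$ is a division ring (thus semisimple), so $k$ is left perfect; by Morita invariance $R$ is left perfect as well. Lemma \ref{cor-flat} therefore applies and gives Hypothesis \ref{hyp1} for $R\text{-Flat}$. Moreover $R/J(R) \cong M_n(k/J(k))$ is simple artinian, so Fact \ref{wa} provides a unique strongly indecomposable $R/J(R)$-module, which Fact \ref{fmany} lifts to a unique strongly indecomposable flat $R$-module. Lemma \ref{basicl} then delivers $\lambda$-categoricity of $R\text{-Flat}$ in every $\lambda > \ccr$. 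The main obstacle will be step $(3) \Rightarrow (4)$: from ``$R$ perfect with $R/J(R) \cong M_n(D)$'' one must extract the explicit matrix presentation $R \cong M_n(k)$ over a local ring, carefully lifting primitive idempotents and verifying that the corner ring of a chosen progenerator is simultaneously local and perfect.
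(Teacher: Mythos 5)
Your proposal is correct and follows essentially the same route as the paper: the cardinality argument showing every countably generated flat module becomes free after taking $\lambda$ copies (hence $R$ is left perfect), followed by Lemma \ref{cor-flat}, Lemma \ref{basicl}, Fact \ref{fmany} and Fact \ref{wa} to pin down the unique strongly indecomposable and force $m=1$, and the same machinery run in reverse for the converse. The only difference is that you prove the ring-theoretic equivalence of (3) and (4) by hand (lifting primitive idempotents and computing the corner ring), whereas the paper simply cites \cite[23.23]{lam2} for it; your sketch of that step is the standard argument and is fine.
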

\begin{proof}
We only need to show (2) implies (3) and (3) implies (1). (1) implies (2) is clear and the equivalence between (3) and (4) is \cite[23.23]{lam2}.

$(2) \Rightarrow (3)$: We show first that $R$ is left perfect. Let $M$ be a non-zero countably generated flat $R$-module, hence $\| M \| \leq \text{card}(R) +\aleph_0 < \lambda$. Then $M^{(\lambda)}$ is free by $\lambda$-categoricity, the fact that free modules are flat and the closure of flat modules under direct sums. Therefore, $M$ is projective, so $R$ is left perfect.

Since $R$ is left perfect, we have that $R/J(R)$ is semisimple. Then as in Theorem \ref{mmod} $R/J(R) \cong  M_{n_1}(D_1) \times \cdots \times M_{n_{m}}(D_{m})$ for some $m \in \mathbb{N}$, $D_1, \cdots, D_m$ division rings and $n_1, \cdots, n_m \in \mathbb{N}$.  By Lemma \ref{cor-flat} and Lemma \ref{basicl} it follows that the class of flat modules has a unique strongly indecomposable module up to isomorphisms. Therefore, $m=1$ by Fact \ref{fmany} and Fact \ref{wa}.

$(3) \Rightarrow (1)$: Since $R$ is left perfect and $R\text{-Flat}$  has a unique strongly indecomposable module by Fact \ref{fmany} and Fact \ref{wa}, the results follows from Lemma \ref{cor-flat} and Lemma \ref{basicl}. \end{proof}

\begin{remark}
The equivalence between (1) and (2) of Theorem \ref{fmain} is an instance of Shelah's Categoricity Conjecture. Since the class of flat left $R$-modules is first-order axiomatizable if and only if $R$ is right coherent \cite[Thm4]{saek}, the result does not follow from Morley's Categoricity Theorem or Shelah's generalization. Moreover, the result does not follow directly from any of the positive results known for AECs.
\end{remark}

The above theorem can be used to provide a natural family of examples of AECs categorical in a tail of cardinals which are not first-order axiomatizable. More precisely, if $R \cong M_n(k)$ for $k$ a local ring such that its maximal ideal is left $T$-nilpotent and $R$ is not right coherent then $R\text{-Flat}$ is $\lambda$-categorical in all $\lambda > \text{card}(R) + \aleph_0$ and it is not first-order axiomatizable. We provide three examples. We thank an anonymous referee  for pointing out the third example.

\begin{example}\label{exflat}\
\begin{enumerate}
\item Let $F$ be a countable field and $V$ be an infinite dimensional $F$-vector space. Let $R =T(F, V)$ be the trivial extension, i.e., $R= F \times V$ and one adds coordinate-wise and $(a,\bar{v})(b,\bar{w})=(ab, a\bar{w} + b\bar{v})$. It is known that $R$ is a commutative perfect local ring that is not coherent, see for example \cite[3.3]{roth}.

\item  (\cite[Example 6]{cny}) Let $R=\mathbb{Z}_2[x_1, x_2, \cdots]$ where $x^3_i =0$,  $x^2_i  = m \neq 0$ for every $i$ and  $x_i x_j =0$ for every $i \neq j$. It is shown in \cite[Example 6]{cny} that $R$ is a commutative semiprimary local ring that is not artinian. Semiprimary rings are perfect and for commutative perfect rings being artinian is equivalent to being coherent by \cite[5.3]{alco}. Therefore, $R$ is a commutative perfect local ring that is not coherent.

\item Let $F$ be a field. Let $R$ be the ring of $\omega \times \omega$ lower triangular matrices over $F$ that are constant in the diagonal and that have only finitely many non-zero entries off the diagonal. It is known that $R$ is a left perfect local ring that is not right perfect \cite[23.22]{lam1}. Since $R$ is left perfect and not right perfect, it follows that $R$ is not right coherent by \cite[14.23]{prest}. Therefore, $R$ is a left perfect local ring that is not right coherent.
\end{enumerate}
\end{example}

%\begin{remark}
%The above rings pr last example is the first algebraic example of a class categorical in a tail of cardinals which is not first-order axiomatizable.
%\end{remark}

In \cite[3.15]{m3} it was shown that $(R\text{-Flat}, \leq_p)$ is superstable\footnote{An abstract elementary class $\K$ is superstable if and only if it has uniqueness of limit models in a tail of cardinals.} if and only if $R$ is left perfect. The next ring provides a natural example of an AEC which is superstable, not categorical in a tail of cardinals and not first-order axiomatizable. This is the first example of this sort.

\begin{example}\label{last}
 Let $F$ be a countable field and $V$ be an infinite dimensional $F$-vector space. Let  $S =T(F, V)$ be the trivial extension. Let $R = S \times S$. Since $R$ is commutative and $S$ is local with a left $T$-nilpotent maximal ideal, it follows that $R$ is perfect by \cite[23.24]{lam1}.  Hence $(R\text{-Flat}, \leq_p)$  is superstable by \cite{m3}.
 
 It is easy to show that $R/J(R) \cong S \times S/J(S \times S) \cong F \times F$. Moreover, $ F \times F \ncong M_n(D)$ for any $n \in \mathbb{N}$ and $D$ a division ring because $ F \times F$ is commutative and not a field. Hence $R$-Flat is not categorical in a tail of cardinals by Theorem \ref{fmain}.(3).

 Finally, $R$ is not artinian because $S$ is not artinian.  Since $R$ is a commutative perfect ring, then $R$ is not coherent by \cite[5.3]{alco}. Hence $R$-Flat is not first-order axiomatizable.
\end{example}

\section{Future work}

In the main results of this paper, i.e., Theorem \ref{lpi}, Theorem \ref{absp-c} and Theorem \ref{fmain}, we showed that the condition of being categorical in a tail of cardinals can be characterized algebraically for several classes of modules. It is natural to ask if this is the case for other classes of modules.

\begin{question}\label{q1} Let $K$ be the class of injective modules, $\Sigma$-injective modules, pure-injective modules, $\Sigma$-pure-injective modules, semisimple modules  or projective modules. Is being categorical in a tail of cardinals equivalent to a natural ring theoretic property for $K$?
\end{question}

It follows from our results that the answer to the question is positive if the ring is left noetherian, left pure-semisimple, semisimple  or left perfect respectively. 

On a different direction, it would be interesting to determine if categoricity in \emph{some} big cardinal can be transferred to categoricity in \emph{all} big cardinals for the following classes of modules.

\begin{question}\label{q2} Let $K$ be the class of injective modules, pure-injective modules or projective modules. Is being categorical in \emph{all} big cardinals equivalent to being categorical in \emph{some} big cardinal for $K$? 
\end{question}

Observe that we dropped the class of $\Sigma$-injective modules, $\Sigma$-pure-injective modules and semisimple modules from the previous question as we know that in those cases the answer is positive by Corollary \ref{cors}. Again, it follows from our results that the answer to the question is positive if the ring is left noetherian, left pure-semisimple  or left perfect respectively.

Finally, in order to transfer categoricity in the case of locally pure-injective modules, absolutely pure modudes and locally injective modules, we used Vasey's result on how to transfer categoricity in AECs (Fact \ref{sebsc}). A natural question to ask is the following: 

\begin{question}
Is it possible to establish Theorem \ref{lpi}, Theorem \ref{absp} or Theorem \ref{linj} using only module theoretic tools?
\end{question}

A positive solution to this last question would shed light on Question \ref{q2} as it would provide new ways to transfer categoricity in an algebraic setting.

\end{document}